\DeclareMathOperator{\Tr}{Tr}
\DeclareMathOperator{\diag}{diag}
\newtheorem{definition}{Definition}
\newtheorem{lemma}{Lemma}
\newtheorem{theorem}{Theorem}
\newtheorem{proposition}{Proposition}
\newtheorem{corollary}{Corollary}
\newtheorem{stability}{Stability test}
\newtheorem{remark}{Remark}
\newenvironment{proof}[1][Proof]{\textbf{#1.} }{\ \rule{0.5em}{0.5em}}
\newcommand{\arrays}[2]{\begin{array}{#1} #2 \end{array}}
\newcommand{\matdos}[4]{
	\begin{pmatrix}
		#1 & #2 \\
		#3 & #4
	\end{pmatrix}
}
\renewcommand{\vec}[1]{\mathbf{#1}}
\newcommand{\vecdos}[2]{
	\begin{pmatrix}
		#1 \\
		#2 
	\end{pmatrix}
}
\newcommand{\parentesis}[1]{\left (  #1 \right ) }
\renewcommand{\d}{\operatorname{d}\!}
\newcommand{\id}{\mathds 1}
\title{The spin-spin model and the capture into the double synchronous resonance}
	\author{ Mauricio Misquero
	\thanks{Department of Mathematics, University of Rome Tor Vergata, Via della Ricerca Scientifica 1, 00133 Rome, Italy. 
	(email: misquero@mat.uniroma2.it).
}
}
\begin{document}
\maketitle

\begin{abstract}
	The aim of this article is to propose a model, that is a planar version of the Full Two-Body Problem, and discuss the existence and stability of a relevant periodic solution. Consider two homogeneous ellipsoids orbiting around each other in fixed coplanar Keplerian orbits. Moreover, their respective spin axes are assumed to be perpendicular to the orbital plane, that is also a common equatorial plane. The spin-spin model deals with the coupled rotational dynamics of both ellipsoids. For a non-zero orbital eccentricity, it has the structure of a non-autonomous system of coupled pendula. This model is a natural extension of the classical spin-orbit problem for two extended bodies. In addition, we consider dissipative tidal torques, that can trigger the capture of the system into spin-orbit and spin-spin resonances. In this paper we give some theoretical results for both the conservative model and the dissipative one. The conservative model has a Hamiltonian structure. We use properties of Hamiltonian systems to give some sufficient conditions in the space of parameters of the model, that guarantee existence, uniqueness and linear stability of an odd periodic solution. This solution represents a double synchronous resonance in the conservative regime. Such solution can be continued to the dissipative regime, where it becomes asymptotically stable. We see asymptotic stability as a dynamical mechanism for the capture into the double synchronous resonance. Finally we apply our results to several cases including the Pluto-Charon binary system and the Trojan binary asteroid 617 Patroclus, target of the LUCY mission.
\end{abstract}

	\textbf{Keywords:} Celestial mechanics, Hamiltonian systems, Dissipative systems, Rotational dynamics, Coupled oscillators, Two-Body problem.


\section{Introduction}\label{sec:intro}

\subsection{Motivation}

The model we propose here is a natural extension of the well known spin-orbit problem of celestial mechanics. The spin-orbit model is an elementary, but not trivial, model to study the rotational dynamics of a satellite about its center of mass when it orbits around a planet. Here the planet acts as a point mass and the satellite is an extended body whose spin axis is perpendicular to the orbital plane. This model has the structure of a nearly integrable and periodically forced pendulum. It has attracted much attention not only for its accurate physical implications but also for its mathematical richness. Some pioneer papers are \cite{bel1966} for the conservative case and \cite{gol1966} for the dissipative case. This model contributes to explain the synchronization of the rotational motion of the Moon and its orbital motion around the Earth. In other words, the Moon is in a 1:1 spin-orbit resonance. This phenomenon is indeed very common in the solar system for natural satellites that are close enough to their respective planets, \cite{mur}. Besides, Mercury, as an orbiting body around the Sun, is locked in a 3:2 spin-orbit resonance. According to \cite{corlas2004}, in its chaotic evolution, Mercury could have reached large orbital eccentricities that made possible the capture into this higher order resonance. It is accepted that the phenomenon of capture into resonances is driven by dissipative torques, caused by internal frictions within the satellite, \cite{mac1964}. The concept of stability of a resonance in the conservative regime is linked to the concept of capture in the dissipative case and both can be related. In one hand, \cite{cel1990} studies the KAM stability in the conservative case, whereas  \cite{cel2009} proves the existence of quasiperiodic attractors for the dissipative problem, that bifurcate from the KAM tori of the conservative case. On the other hand, \cite{misort2020} proves the existence of an asymptotically stable solution in 1:1 resonance that is a continuation of a linearly stable odd periodic solution of the conservative case. The onset of chaos is another interesting feature of this problem. The oblateness of the satellite produces chaotic regions in the phase space that surround the libration regions of resonances. Chaotic zones can be very large due to overlapping of different resonances, \cite{chi1979}. A large eccentricity emphasizes this behavior, as in the case of Hyperion, \cite{wis1984}, \cite{wis1987}.

The Full Two-Body Problem (F2BP) deals with the dynamics of two extended bodies interacting gravitationally. It has been extensively investigated, especially in the last two decades, due to an increasing interest on binary systems. Due to its complexity, most of the studies are numerical explorations of particular cases, see \cite{fah2006} or \cite{comlem2014}. There are some works with a more analytical approach dealing with relative equilibria and stability, \cite{sch2002} and \cite{mac1995}. The spin-spin model is motivated mainly by \cite{boulas2009}, \cite{sch2009}, \cite{davsch2020} and \cite{batmor2015}. In one hand, \cite{boulas2009} is focused on the  evolution of the orbit and the spin axes of the bodies in the secular F2BP (averaging over fast angles). This paper points out that the mutual influence in the spin dynamics is contained in the terms of order $1/r^5$ of the expansion of the potential energy of the system, where $r$ is the distance between the bodies. On the other hand, \cite{sch2009} studies the relative equilibria and stability in the planar case, i.e., the spin axes of the bodies are perpendicular to the orbital plane, that is also a common equatorial plane. \cite{davsch2020} studies the observability of non-planar stable oscillations around the double synchronous equilibrium in binary asteroids. In \cite{sch2009} and \cite{davsch2020}, only terms up to $1/r^3$ of the potential energy are considered, so the resulting system is equivalent to two uncoupled spin-orbit problems. The planar spin-spin coupling was first studied in \cite{batmor2015}, making an analogous study as the classical paper \cite{gol1966} on the spin-orbit coupling. Particularly, \cite{batmor2015} studies the spin of the body 1, identified with two point masses slightly separated from each other (dumbbell model), that moves in a circular orbit around the body 2, an ellipsoid with uniform rotation. They focus on the case when the orbital motion is slow and the angular velocity of the body 1 becomes commensurable with the angular velocity of the body 2 (spin-spin resonance). 

The model we propose in this paper deals with the complete coupled dynamics of the F2BP in the planar and ellipsoidal case. As usual in the spin-orbit problem, we also assume that the orbital motion takes place in Keplerian ellipses. This reduces the high dimensional phase space of the F2BP to a problem of two degrees of freedom (spins) plus time-dependence (orbit). For a small non-zero orbital eccentricity, it has the structure of a nearly integrable system of coupled pendula that is periodically forced. This setting is suited to study the phenomena related to spin-orbit and spin-spin resonances. Furthermore, the intrinsic dissipative nature of the capture into resonances supports the relevance of this model. The reason is that the most used family of dissipative torques, see \cite{mac1964}, is of order $1/r^6$, whereas the spin-spin coupling appears at order $1/r^5$. In addition to the questions related to the spin-orbit problem, this model of coupled oscillators opens new questions that were not possible to consider before. We will discuss this in \Cref{sec:discussion}.

\subsection{Setting of the model.}\label{sub:model}

Consider two homogeneous ellipsoids $\mathcal E_1$ and $\mathcal E_2$ with respective masses $M_j$, $j=1,2$, principal moments of inertia $\mathcal  A_j<\mathcal B_j< \mathcal C_j$ and corresponding principal semi-axes $\mathsf a_j>\mathsf b_j>\mathsf  c_j$. Assume that the orbital motion of the ellipsoids is the same as for two point masses, say, the centers of the ellipsoids describe coplanar Keplerian orbits of eccentricity $e\in[0,1)$ with a common focus at the center of mass of the system. Moreover, assume that the spin axis of each body is the principal axis associated to $\mathsf c_j$ and is perpendicular to the orbital plane.

\begin{figure}
	\centering
	\scalebox{0.5}{\includegraphics{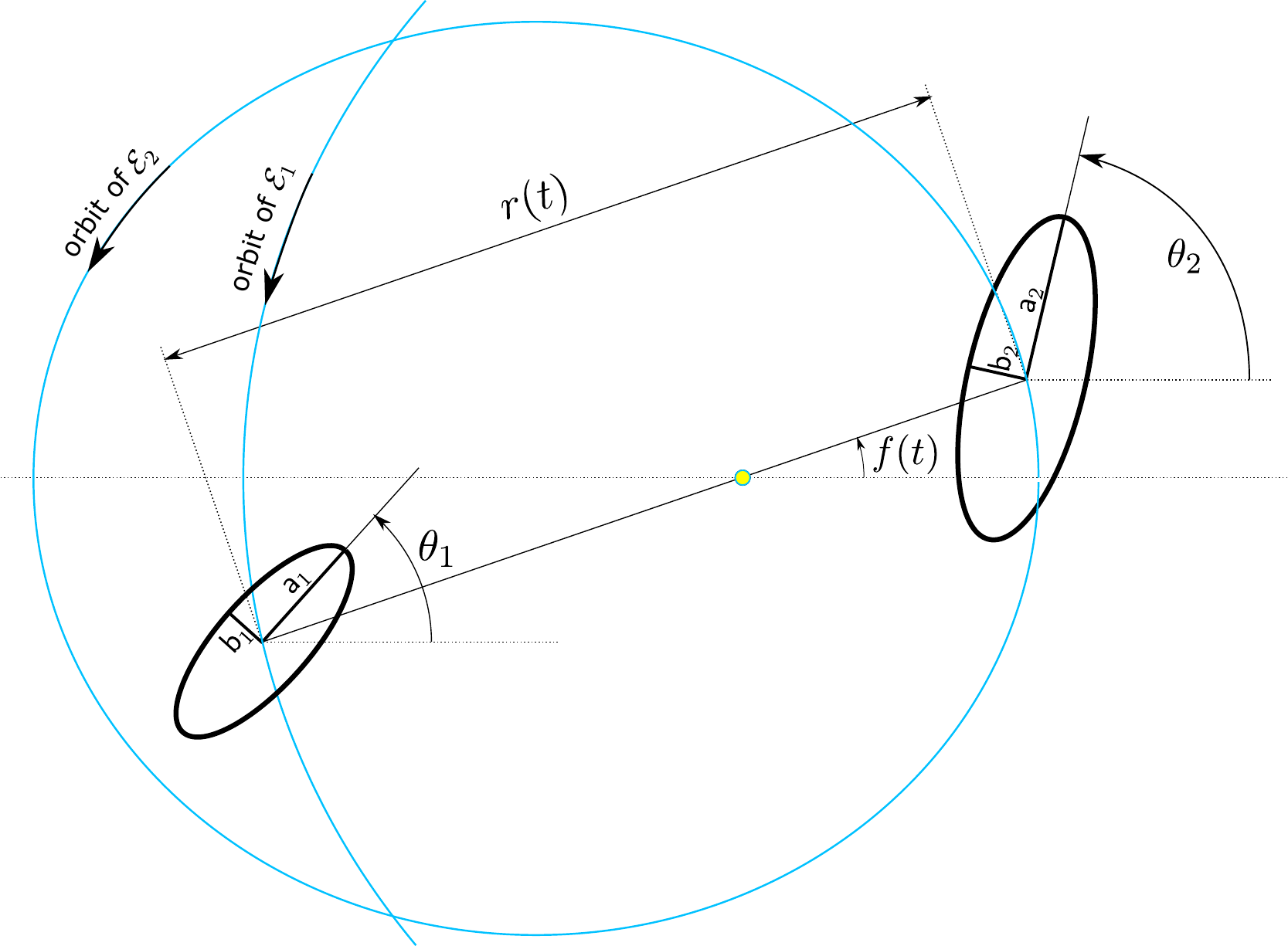}}
	\caption{The planar spin-spin problem.}
	\label{fig:spin-spin}
\end{figure}

Let us identify the orbital plane with the complex plane $\mathds C$. Consider the center of mass of the system fixed at the origin and let the center of each ellipsoid be $\vec r_j$, then, $M_1 \vec r_1+M_2 \vec r_2=0$. If we define the relative position vector $\vec r= \vec r_2-\vec r_1$ and choose the units of mass such that $M_1 +M_2 =1$, then, $\vec r_1=-M_2 \vec r$ and $\vec r_2=M_1 \vec r$. The orbital motion is defined by $\vec r$, which can be written as $\vec r=r \exp(i f)\in \mathds C$, where $r>0$ and $f$ are real functions of the time. Note that $\vec r$ describes an ellipse of eccentricity $e\in[0,1)$ and semimajor axis $a$ with focus at the origin, so, the polar coordinates $r$ and $f$ vary periodically with time, and are known by the Kepler problem. Let us take convenient units of time so that the period is $2\pi$. In the usual terminology, $f$ is called \textit{true anomaly} and the time $t$ is the \textit{mean anomaly}. There is a third useful angle $u$, the \textit{eccentric anomaly}, which is defined by the famous Kepler's equation
\begin{equation}\label{t}
t=u-e\sin u,
\end{equation}

\noindent and let us determine the Keplerian ellipse simply by
\begin{equation}\label{r}
r=a(1-e \cos u).
\end{equation}

\noindent Also, using the graphical definition of the eccentric anomaly and some geometrical relations of ellipses, we can write the position of $\vec r$ in terms of the eccentric anomaly as 
\begin{equation}\label{f}
r\exp(i f)=a(\cos u-e + i \sqrt{1-e^2} \sin u).
\end{equation}

\noindent Note that for $t=0$ we assumed that $f=u=0$, and consequently, $f=u=\pi$ when $t=\pi$. The expressions \cref{f,r} relate the true and eccentric anomalies. Moreover, \Cref{t,r,f} define $u=u(t,e)$, $\frac{r}{a}=\frac{r}{a}(t,e)$ and $f=f(t,e)$ as analytic functions in both entries. 

Recall Kepler's third law for the Two-Body Problem
\begin{equation}\label{kepler3}
G(M_1+M_2) \parentesis{\frac{T}{2\pi}}^2=a^3,
\end{equation}

\noindent where $G$ is the Gravitational constant and $T$ is the orbital period. In consequence, $G=a^3$ in our units. For our model to be completely non-dimensional and adequate to the scale of the system, we take convenient units of length such that $\mathcal  C_1+\mathcal  C_2=1$. In these units the semi-major axes $\mathsf{a}_j$ of the ellipsoids are of order $1$, whereas $a$ should be much larger. See \Cref{ap:units} for specific conversion of units.

Let $\theta_j$ be the polar angle of the principal direction associated to $\mathsf a_j$ with respect to the orbit's major axis.  See \Cref{fig:spin-spin}. The spin dynamics of the ellipsoids is modelled by the following coupled system of ordinary differential equations
\begin{equation}\label{spin_spin_dis}
\mathcal C_j \ddot \theta_j  = \mathcal T_j^\text{C}(t,\theta_1,\theta_2)+\mathcal T_j^\text{D}(t,\dot \theta_j),\qquad j=1,2,
\end{equation}

\noindent where $\mathcal T_j^\text{C}$ and $\mathcal T_j^\text{D}$ are respectively the \textit{conservative} and \textit{dissipative} torques acting on $\mathcal E_j$. 

The \textit{conservative} torque is derived from the potential gravitational energy, see \Cref{sec:model}, and it takes the form
\begin{multline}\label{T_cons}
\mathcal T_j^\text{C}(t,\theta_1,\theta_2)= - \parentesis{\frac{a}{ r(t)}}^{3} \frac{\Lambda_j}{2} \sin (2\theta_j-2f(t))   \\ 
- \parentesis{\frac{a}{ r(t)}}^{5}
\sum_{(m_1,m_2)\in \Xi }	
\frac{m_j \,\Lambda_{m_2}^{m_1}}{2}   
\sin(2 m_1(\theta_1-f(t))+2m_2 (\theta_2-f(t))),
\end{multline}

\noindent where
\begin{equation*}
\Xi=\{(m_1,m_2)\in \mathds Z^2 : \ |m_1|+ |m_2|\le 2\}.
\end{equation*}

\noindent In \cref{T_cons} we have ignored terms of order $\parentesis{{a}/{ r(t)}}^{n}$ with $n\ge7$. The parameters $\Lambda_j$ and $\Lambda_{m_2}^{m_1}$ are positive small quantities depending on the physical parameters of the bodies and on $a$. These parameters satisfy $\Lambda_{m_2}^{m_1}=\Lambda_{-m_2}^{-m_1} < \Lambda_j<3 \mathcal C_j$. Note that if all the constants $\Lambda_{m_2}^{m_1}$ in \cref{T_cons} vanish, the system \cref{spin_spin_dis} is formed by two uncoupled spin-orbit problems in $\theta_1$ and $\theta_2$. The coupling of the system is contained in the terms $({m_1},{m_2})$ of type $(\pm 1,\pm 1)$ and $(\pm 1,\mp 1)$, whereas the rest of them are high order spin-orbit terms. 

On the other hand, the \textit{dissipative} torque $\mathcal T_j^D$ has different forms depending on the model. We will use a linear MacDonald torque \cite{mac1964}
\begin{equation}\label{macdonald}
\mathcal T_j^D(t,\dot \theta_j) = -C_{M,j} \parentesis{\frac{a}{ r(t)}}^6  \sin(2 \Delta t_j (\dot \theta_j - \dot f(t))) \approx -\delta_j \mathcal{C}_j\parentesis{\frac{a}{ r(t)}}^6 (\dot \theta_j - \dot f(t)),
\end{equation}

\noindent where $C_{M,j}$ are constants depending on the parameters of the bodies. Here we assumed that $|\Delta t_j (\dot \theta_j - \dot f(t))|\ll 1$ because the parameters $\Delta t_j$ and $\delta_j$ are very small positive numbers. This type of torque has been extensively used, taking as reference \cite{gol1966} or \cite{mur}, for example. According to \cite{efr2013}, to obtain \cref{macdonald}, the dissipation is modelled by assuming that there is a time delay between the deforming disturbance and the actual deformation of each body. That delay is a small fixed amount $\Delta t_j$ (time lag), leading to an angular lag of $ (\dot f(t,e) -\dot \theta_j )\Delta t_j$ (geometric lag). It is worth mentioning that there is no physical reason for both lags (or both $\delta_j$) to match.

Note that if $\mathcal T_j^\text{D}=0$, the system \cref{spin_spin_dis} has a Hamiltonian structure. The corresponding Hamiltonian has two degrees of freedom and time dependence and it is given by
\begin{equation}\label{ham}
H(\theta_1,\theta_2,p_{\theta_1},p_{\theta_2},t) = \frac{p_{\theta_1}^2 }{2 \mathcal C_1}+ \frac{p_{\theta_2}^2 }{2 \mathcal C_2} + \mathcal V(t, \theta_1,\theta_2),
\end{equation}
\noindent where
\begin{multline*}
\mathcal V(t, \theta_1,\theta_2) = -\frac{1}{4} \parentesis{\frac{a}{ r(t)}}^3\sum_{j=1}^2 \Lambda_j\cos (2\theta_j-2f(t)) \\-\frac{1}{4}\parentesis{\frac{a}{ r(t)}}^{5}
\sum_{(m_1,m_2)\in \Xi} 
\Lambda_{m_2}^{m_1}
\cos(2 m_1(\theta_1-f(t))+2m_2 (\theta_2-f(t))).
\end{multline*}

\noindent Due to the explicit time dependence of the Hamiltonian, the energy of the system is not constant even though $\mathcal T_j^\text{D}\equiv0$. However, if $\mathcal T_j^\text{D}\equiv0$, the system \cref{spin_spin_dis} will be called \textit{conservative}, because no dissipative forces are involved in the physical derivation of the model. On the other hand, if $\mathcal T_j^\text{D}$ is not identically zero for all time, then we will call it \textit{dissipative}. The italic font will remark this point. In \Cref{sec:model} we will see also a purely conservative version of the model involving $(r,f,\theta_1,\theta_2)$ as unknown functions of time.

There are solutions of \cref{spin_spin_dis} that are especially relevant. Since the spin-orbit problem is a particular case of \cref{spin_spin_dis}, a solution satisfying $\theta_1(t+2\pi n_o)=\theta_1(t)+2\pi n_s$, with $n_s,n_o\in \mathds Z$, is called $n_s:n_o$ spin-orbit resonance of the ellipsoid $\mathcal E_1$. The same is true for $\mathcal E_2$. Spin-spin resonances arise when the spin rates of the two ellipsoids become commensurable. In \cite{batmor2015} these resonances were studied independently from the orbital rate. There are some solutions in which the ellipsoids are simultaneously in a spin-orbit and a spin-spin resonance. The simplest of these resonances is the double synchronous resonance of equation \cref{spin_spin_dis}, that is, solutions satisfying $\theta_j(t+2\pi )=\theta_j(t)+2\pi $, for both $j=1,2$. In other words, the spin of both ellipsoids synchronize with the orbital motion at the same time.

\subsection{Setting of our approach and results}

We are going to deal with the capture into the double synchronous resonance. In the same way as in \cite{misort2020}, in this paper we will approach this phenomenon from an analytical point of view. We will look for conditions resulting in the existence of a double synchronous solution of the \textit{conservative} model that can be continued to an asymptotically stable solution of the \textit{dissipative} model. In this context, the asymptotic stability of the solution represents the phenomenon of capture into the resonance: solutions in the vicinity of the asymptotically stable solution get closer and closer to it as $t\rightarrow +\infty$.

Let us take the change of variable $\Theta_j =2(\theta_j - f)$, such that the system \cref{spin_spin_dis} turns into
\begin{multline}\label{spin_spin_dis_mayus}
\mathcal C_j\ddot \Theta_j + \delta_j \mathcal C_j \parentesis{\frac{a}{ r(t)}}^{6} \dot \Theta_j 
+\parentesis{\frac{a}{ r(t)}}^{3} \Lambda_j\sin \Theta_j \\
+\parentesis{\frac{a}{ r(t)}}^{5}
\sum_{(m_1,m_2)\in \Xi }m_j \,\Lambda_{m_2}^{m_1}	 
\sin(m_1\Theta_1+m_2 \Theta_2) = -2\mathcal C_j \ddot f(t)  .
\end{multline}

\noindent The system \cref{spin_spin_dis_mayus} models a couple of damped and forced pendula of variable length. Since $f(t+2\pi)=f (t)+2\pi$, then, double synchronous resonances correspond to solutions of \cref{spin_spin_dis_mayus} satisfying $\Theta_j(t+2\pi)=\Theta_j (t)$.

In \Cref{sec:model} we will make the derivation of the \textit{conservative} model from the Lagrangian of the physical system and obtain the expression of $\Lambda_j $ and $\Lambda_{m_2}^{m_1} $ in terms of physical parameters. In \Cref{sec:lin_stab}, we will deal with the \textit{conservative} system, say, \cref{spin_spin_dis_mayus} with $\delta_j=0$,
\begin{equation}\label{spin_spin_mayus}
\mathcal C_j\ddot \Theta_j  
+\parentesis{\frac{a}{ r(t)}}^{3} \Lambda_j\sin \Theta_j 
+\parentesis{\frac{a}{ r(t)}}^{5}
\sum_{(m_1,m_2)\in \Xi }m_j \,\Lambda_{m_2}^{m_1}	 
\sin(m_1\Theta_1+m_2 \Theta_2) = -2\mathcal C_j \ddot f(t)  ,
\end{equation}

\noindent and discuss the existence, uniqueness and linear stability of an odd $2\pi$-periodic solution. This solution is a continuation of the trivial solution $\Theta(t)\equiv 0$ for $e=0$. This will lead us to a region of linear stability in the space of parameters of the system. In this section we will use some properties of symmetric matrices and linear Hamiltonian systems with periodic coefficients. We are interested in the linear stability of the periodic solution of \cref{spin_spin_mayus} because it will allow us to find, by continuation, an asymptotically stable periodic solution for the \textit{dissipative} case \cref{spin_spin_dis_mayus} for $\delta_j>0$. This will be proved in \Cref{sec:asymp}, provided that $|\Lambda_{m_2}^{m_1}|$ and $|\delta_j|$ are small enough. In \Cref{sec:app} we will explain how to apply our results to real cases and use the Pluto-Charon system and the binary asteroid 617 Patroclus as two representative examples. We will also compare our estimates with some numerical experiments and with the spin-orbit problem. Finally in \Cref{sec:discussion} we will make a discussion about the model and our results.

\section{Derivation of the conservative spin-spin model}\label{sec:model}

In this section we will compute the equations of motion of the ellipsoids with respect to the inertial frame with origin at the barycenter of the system. \Cref{sub:Lag} is devoted to find the equations of motion of the full system of four variables $( r,f,\theta_1,\theta_2)$, in terms of the gravitational potential energy $V=V(r,f,\theta_1,\theta_2)$. In \Cref{sub:kep} we fix the Keplerian orbit and obtain the final model in terms of physical parameters of the system.

\subsection{The planar Lagrangian model}\label{sub:Lag}

Let the Lagrangian of the system be $L=T-V$, where $T$ is the kinetic energy and $V$ the potential energy of the system. Recall that the positions of the bodies are $\vec r_1=-M_2 \vec r$ and $\vec r_2=M_1 \vec r$, where the relative position vector is defined by $\vec r= \vec r_2-\vec r_1=r \exp(if)$. Besides, for each body, the angle $\theta_j$ defines the orientation of the axis associated to $\mathsf a_j$. We are going to use $r$, $f$, $\theta_1$ and $\theta_2$, depicted in \Cref{fig:spin-spin}, as the Lagrangian variables of our system. The total orbital kinetic energy is given by
\begin{equation*}
T_{orb}=\frac{1}{2} (M_1 \dot{\vec r_1}^2 +M_2 \dot{\vec r_2}^2) = \frac{\mu}{2} \dot{\vec r}^2 = \frac{\mu}{2} (\dot r^2 + r^2 \dot f^2),
\end{equation*}

\noindent where $\mu=M_1M_2$ is the the reduced mass of the system (recall $M_1+M_2=1$). While the rotational kinetic energy is $T_{rot}=\frac{1}{2} \mathcal C_1 \dot \theta_1^2 + \frac{1}{2} \mathcal C_2 \dot \theta_2^2$. See \Cref{app:potential} for the derivation of the full expression of the potential energy of the system $V=V(r,f,\theta_1,\theta_2)$, equation \cref{V_full}. The Euler-Lagrange equations corresponding to the Lagrangian $L=T_{orb}(r,\dot r,\dot f) + T_{rot}(\dot \theta_1,\dot \theta_2)-V(r,f,\theta_1,\theta_2)$ are
\begin{equation}\label{spin-spin_V}
 \mathcal C_1\ddot \theta_1  = -\partial_{\theta_1} V ,\quad  \mathcal C_2\ddot \theta_2  = -\partial_{\theta_2} V,
\end{equation}
\begin{equation}\label{orbital_eqs}
\mu \ddot r = \mu r\dot f^2 - \partial_r V,\quad  \ddot f = -\frac{1}{\mu  r^2}\partial_{f} V  -2 \frac{\dot r \dot f}{r}. 
\end{equation}

In \Cref{sub:V_planar} we give the expansion of the potential energy. In the case of ellipsoids, it has the form $V=\sum_{n=0}^\infty V_{2n}$, where $V_{2n}$ is proportional to $1/r^{2n+1}$. The first terms of the expansion are
\begin{equation*} 
V_0=-\frac{GM_1M_2}{r}.
\end{equation*}
\begin{equation*}
V_2= -\frac{GM_2}{4r^3} \parentesis{q_1+3d_1\cos(2(\theta_1-f))}
-\frac{GM_1}{4r^3} \parentesis{q_2+3d_2\cos(2(\theta_2-f))},
\end{equation*}
\begin{equation}\label{V_4_ph_parameters}
\arrays{rl}{
	\displaystyle V_4=-\frac{3G}{4^3 r^5}  \{
	& 12q_1q_2 + \frac{15}{7} [\frac{M_2}{M_1}d_1^2+2\frac{M_2}{M_1}q_1^2 + \frac{M_1}{M_2}d_2^2+2\frac{M_1}{M_2}q_2^2]  \\
	&+d_1 M_2\left \{ [20\frac{q_2}{M_2} + \frac{100}{7}\frac{q_1}{M_1}]\cos (2(\theta_1-f)) +25 \frac{d_1}{M_1}\cos (4(\theta_1-f)) \right \}\\
	&+d_2 M_1 \left \{[20\frac{q_1}{M_1} + \frac{100}{7}\frac{q_2}{M_2}]\cos (2(\theta_2-f)) + 25 \frac{d_2}{M_2}\cos (4(\theta_2-f)) \right \}\\
	&+6d_1 d_2 \cos (2(\theta_1-\theta_2))+70d_1 d_2  \cos (2(\theta_1+\theta_2)-4f)
	\},
}
\end{equation}

\noindent where we defined the parameters
\begin{equation}\label{q_d}
d_j=\mathcal B_j- \mathcal A_j, \qquad 	
q_j=2\mathcal C_j- \mathcal B_j- \mathcal A_j.
\end{equation}

\noindent Note that $d_j$ is proportional to $C^{j)}_{22}$, whereas $q_j$ is proportional to $C^{j)}_{20}$, where $C^{j)}_{nm}$ are the usual coefficients in the expansion of the gravitational potential of the ellipsoid $\mathcal E_j$. The quantity $d_j/\mathcal C_j$ measures the oblateness of the section of the ellipsoid in the plane of motion, whereas, $q_j/\mathcal C_j$ measures the flattening with respect to the plane. If $\mathcal  A_j \le\mathcal B_j\le \mathcal C_j$, then, $q_j\ge d_j\ge 0$. Note that the term $V_0$ contains the dynamics of two point masses, $V_2$ the uncoupled spin-orbit dynamics and $V_4$ the spin-spin coupled dynamics between $\theta_1$ and $\theta_2$. The coupling terms appear in the last line of \cref{V_4_ph_parameters}.

\subsection{The Keplerian assumption and the spin-spin model}\label{sub:kep}

The complete dynamics of the system is given by \Cref{spin-spin_V,orbital_eqs}, with $V$ in \cref{V_full}. In this paper we impose that the orbital motion is Keplerian, i.e., we keep only $V_0$ in the orbital part \cref{orbital_eqs}. Besides, in the spin part \cref{spin-spin_V}, we truncate $V$ ignoring terms of order $1/r^7$ and higher, then $V\approx V_0+V_2+V_4$. The resulting system is
\begin{equation}\label{spin-sin_V_trun}
\mathcal C_1\ddot \theta_1  = -\partial_{\theta_1} (V_0+V_2+V_4) ,\quad  \mathcal C_2\ddot \theta_2  = -\partial_{\theta_2} (V_0+V_2+V_4),
\end{equation}
\begin{equation}\label{orbital_eqs_kep}
\mu \ddot r = \mu r\dot f^2 - \partial_r V_0,\quad  \ddot f = -\frac{1}{\mu  r^2}\partial_{f} V_0  -2 \frac{\dot r \dot f}{r}.
\end{equation}

\noindent Note that, since $\partial_{\theta_j} V_0=0$, the system \cref{orbital_eqs_kep} is now decoupled from \cref{spin-sin_V_trun}. Its solution is $r=r(t)$, $f=f(t)$ given by \Cref{t,r,f} and depends on the eccentricity of the orbit $e$ and its semi-major axis $a$.

Let us now write $V_2$ and $V_4$ in a more convenient way. The quantity $M_j a^2$ is a sort of orbital moment of inertia of the body $\mathcal E_j$. Then, we can define 
\begin{equation}\label{hat_d_q}
\hat d_j=\frac{d_j}{M_j a^2},\quad \hat q_j=\frac{q_j}{M_j a^2},
\end{equation}

\noindent so that $\hat d_j$ measures the equatorial oblateness of $\mathcal E_j$ with respect to the size of the orbit and $\hat q_j$ measures the flattening of $\mathcal E_j$ with respect to the size of the orbit.

Taking into account that in our units $G=a^3$, the terms $V_2$ and $V_4$ can be written in a compact way as
\begin{equation}\label{V_2}
V_2=-\frac{1}{4}\parentesis{\frac{a}{ r(t)}}^{3}  (\Lambda_0+\Lambda_1\cos (2\theta_1-2f(t))+\Lambda_2\cos (2\theta_2-2f(t)))
\end{equation}

\noindent and
\begin{equation}\label{V_4}
V_4=-\frac{1}{4}\parentesis{\frac{a}{ r(t)}}^{5}
\sum_{(m_1,m_2)\in \Xi} 
\Lambda_{m_2}^{m_1}
\cos(2 m_1(\theta_1-f(t))+2m_2 (\theta_2-f(t)))
\end{equation}

\noindent where
\begin{equation*}
\Xi=\{(m_1,m_2)\in \mathds Z^2 : \ |m_1|+|m_2|\le 2 \},
\end{equation*}

\noindent and the following $\Lambda$ parameters are defined by
\begin{equation}\label{Lambda_j}
\Lambda_1=3 d_1M_2,\qquad \Lambda_2=3d_2M_1,
\end{equation}
\begin{equation}\label{Lambda_1_0}
\Lambda_0^1=\Lambda_0^{-1}=\frac{5}{56}(7\hat q_2+5\hat q_1) \Lambda_1, \qquad \Lambda_1^0=\Lambda_{-1}^0=\frac{5}{56}(7\hat q_1+5\hat q_2) \Lambda_2,
\end{equation}
\begin{equation}\label{Lambda_2_0}
\Lambda_0^2=\Lambda_0^{-2}=\frac{25}{32}\hat d_1 \Lambda_1, \qquad \Lambda_2^0=\Lambda_{-2}^0=\frac{25}{32}\hat d_2 \Lambda_2,
\end{equation}
\begin{equation}\label{Lambda_1_1}
\Lambda_1^1=\Lambda_{-1}^{-1} = \frac{35}{16} \hat d_1 \Lambda_2 = \frac{35}{16} \hat d_2 \Lambda_1,\qquad
\Lambda_1^{-1}=\Lambda_{-1}^1 = \frac{3}{16} \hat d_1 \Lambda_2 = \frac{3}{16} \hat d_2 \Lambda_1
\end{equation}
\begin{equation*}
\Lambda_0= q_1M_2+ q_2M_1,\qquad\Lambda_0^0= \frac{9}{4}\hat q_1 q_2 M_1 + \frac{15}{112}(\Lambda_1 \hat d_1+6\hat q_1 q_1M_2 + \Lambda_2 \hat d_2+6\hat q_2 q_2M_1). 
\end{equation*}

\noindent With the last definitions we can write equations \cref{spin-sin_V_trun} as $\mathcal C_j \ddot \theta = \mathcal T_j^C$, where $\mathcal T_j^C=-\partial_{\theta_j} (V_2+V_4) $ are the \textit{conservative} torques of the spin-spin model shown in \cref{T_cons}. This can be checked with the expressions \cref{V_2} and \cref{V_4}. Note that $m_j\Lambda_{m_2}^{m_1}$ is in all cases proportional to the corresponding $\Lambda_j$. Then, the equations of the \textit{conservative} spin-spin model \cref{spin-sin_V_trun} can be written in terms of the physical parameters in the following symmetric way for $j=1,2$,
\begin{multline}\label{spin_spin_j}
0=   \ddot \theta_j +
  \frac{\lambda_j}{2} \Big \{   \parentesis{\frac{a}{ r(t)}}^{3} \sin(2 \theta_j-2f(t)) + \\
+  \parentesis{\frac{a}{ r(t)}}^{5} \Big [ 
\frac{5}{4} \parentesis{ \hat q_{3-j}+ \frac{5}{7}  \hat q_j} \sin(2 \theta_j-2f(t))
+\frac{25   \hat d_j}{8}  \sin(4 \theta_j-4f(t))  \\
+ \frac{3   \hat d_{3-j}}{8}  \sin(2 \theta_j-2\theta_{3-j})
+ \frac{35   \hat d_{3-j}}{8}  \sin(2 \theta_{3-j}+2\theta_j-4f(t))
\Big ] \Big\},
\end{multline}

\noindent where
\begin{equation*}
\lambda_j=\frac{\Lambda_j}{\mathcal C_j} = 3\frac{d_j}{\mathcal C_j}\frac{\mu}{M_j}.
\end{equation*}

It is worth mentioning that the terms with $\hat q_j$ and $\hat d_j$ in \cref{spin_spin_j} were missing in the model used in \cite{batmor2015} due to the dumbbell simplification for one of the bodies in the derivation of the equations. Not all the parameters appearing in \cref{spin_spin_j} are free because the following identities hold
\begin{equation}
\mathcal C_1+\mathcal C_2=1, \quad  \Lambda_1 \hat d_2=\Lambda_2 \hat d_1,\quad \Lambda_1 \hat q_2=\Lambda_2 \hat q_1.
\end{equation}

\noindent In consequence, our model depends on six independent parameters with physical meaning $(e;\mathcal C_1, \lambda_1, \lambda_2, \hat d_1,\hat q_1)$. Moreover, in \cref{spin_spin_j} we see that spin of the ellipsoid $\mathcal E_2$ is affected by the spin-spin coupling with a strength essentially given by $\hat d_1$, and vice versa.

\section{Linear stability of the double synchronous resonance in the conservative model}\label{sec:lin_stab}

In this section we deal with the \textit{conservative} system with the notation in \cref{spin_spin_mayus}, that is more convenient for our purpose. The main result is \Cref{th_estabilidad}. It determines a region of linear stability of the double synchronous resonance in the space of parameters of the system.

\subsection{Existence of the odd $2\pi$-periodic solution}

The system \cref{spin_spin_mayus} can be written as
\begin{equation}\label{F}
\mathcal C \ddot \Theta  +F(t,\Theta)=0,
\end{equation}

\noindent where
\begin{equation*} \Theta=\vecdos{\Theta_1}{\Theta_2},	\quad \mathcal C=\matdos{\mathcal C_1}{0}{0}{\mathcal C_2},\quad \mathcal C_j>0,
\end{equation*}

\noindent and $F(t,\Theta)$ is the bounded function given by
\begin{multline}\label{F_Theta}
F(t,\Theta) =
\parentesis{\frac{a}{ r(t)}}^{3}  \vecdos{\Lambda_1\sin \Theta_1}{\Lambda_2\sin \Theta_2} + \\
+\parentesis{\frac{a}{ r(t)}}^{5}
\sum_{(m_1,m_2)\in \Xi } \vecdos{m_1}{m_2} \,\Lambda_{m_2}^{m_1}	 
\sin(m_1\Theta_1+m_2 \Theta_2)
	+2\ddot f(t) \vecdos{\mathcal C_1}{\mathcal C_2}.
\end{multline}

Note that equation \cref{F} is invariant under the change $(t,\Theta)\rightarrow (-t,-\Theta)$, since  $f(-t)=-f(t)$ and $r(-t)=r(t)$. Then, if $\Theta(t)$ is a solution of \cref{F}, so it is $-\Theta(-t)$. On the other hand, for $e=0$, we have $f(t)=t$ and $r(t)=a$, meaning that the system \cref{F} is that of two coupled free pendula. For this case, the trivial solution $\Theta(t)\equiv 0$ is a stable equilibrium. Then, for $e\ne0$, it is natural to look for the $2\pi$-periodic continuation of $\Theta(t)\equiv 0$  in the family of the odd solutions of \cref{F}, say, solutions satisfying $\Theta(-t)=-\Theta(t)$. This is equivalent to solve the Dirichlet problem
\begin{equation}\label{Dirichlet_e}
\left \{
\arrays{l}{
	\mathcal C \ddot \Theta  +F(t,\Theta)=0, \\
	\Theta(0)=\Theta(\pi)=0.
}
\right.
\end{equation}

It is well known from nonlinear analysis that the system \cref{Dirichlet_e} has at least one solution because $F(t,\Theta)$ is bounded. We can give a simple proof for this. Let $\Theta(t)=\vartheta(t,v)$ be the solution of \cref{F} satisfying initial conditions $\Theta(0)=0$, $\dot\Theta(0)=v\in \mathds{R}^2$. Solutions of the problem \cref{Dirichlet_e} are in correspondence with the solutions of the equation $\vartheta(\pi,v)=0$. From \cref{F}, we know that $\vartheta$ satisfies the following integral equation
\begin{equation}\label{integral_eq}
\vartheta(t,v) = v t - \int_0^t (t-s)\mathcal C^{-1}F(s,\vartheta(s,v))\d s.
\end{equation}

Let $||\cdot||$ be a norm in $\mathds R^2$, for instance, the maximum norm or the Euclidean one. We will employ the same notation for the corresponding induced matrix norm in $\mathds R^{2\times 2}$. Since there exists a positive number $M\ge ||\mathcal C^{-1}F(t,\Theta)||$, then
\begin{equation*}
||\vartheta(t,v) - v t || \le M\frac{t^2}{2},
\end{equation*}

\noindent for each $t\in \mathds R$. If we take $t=\pi$, then, $||\Phi(v)||\le M \pi/2$, with $\Phi(v)=v-\vartheta(\pi,v)/\pi$ and $v\in \mathds R^2$. Hence, we can apply Brouwer's fixed-point theorem to guarantee that $\Phi(v)$ has a fixed point for some $v_0$ satisfying $||v_0||\le M \pi/2$. For such point we have that $\vartheta(\pi,v_0)=0$, and the corresponding $\vartheta(t,v_0)$ satisfyies \cref{Dirichlet_e}.

\subsection{Uniqueness of the solution}

We know now that the Dirichlet problem \cref{Dirichlet_e} has a solution, however, it is not necessarily unique. For instance, if $\Lambda > 1$, there is not a unique solution for the free pendulum equation $\ddot x+\Lambda \sin x =0$, $x\in \mathds R$, with Dirichlet conditions $x(0)=x(\pi)=0$. See \cite{maw2004}. We would like to determine sufficient conditions on the space of parameters of the system such that there is uniqueness for the problem \cref{Dirichlet_e}.

We can prove uniqueness by a contradiction argument. Define the following matrix
\begin{equation*}
\mathcal{C}^{1/2} = \matdos{\sqrt{\mathcal C_1}}{0}{0}{\sqrt{\mathcal C_2}}
\end{equation*}

\noindent and its inverse $\mathcal{C}^{-1/2}=(\mathcal{C}^{1/2})^{-1}$. Let $\Theta^{(0)}(t)$ and $\Theta^{(1)}(t)$ be two non-identical solutions of \cref{Dirichlet_e}. Then, we can check that $y(t)=\mathcal{C}^{1/2}(\Theta^{(1)}(t)-\Theta^{(0)}(t))$ is a solution of the Dirichlet problem
\begin{equation}\label{y}
\left \{
\arrays{l}{
	\ddot y + A(t) y=0, \\
	y(0)=y(\pi)=0,
}
\right.
\end{equation}

\noindent with $A(t)$ a symmetric\footnote{In this paper we use properties of linear systems with symmetric coefficient matrices. $\mathcal C^{-1}\partial_\Theta F(t,\Theta)$ is not symmetric, but we obtain the desired structure using $\mathcal C^{1/2}$. See \cite{yak1975}.} matrix given by 
\begin{equation}\label{A_int}
\mathcal{C}^{1/2}A(t)\mathcal{C}^{1/2}=\int_0^1 \partial_\Theta F(t, \Theta^{(\lambda)}(t)) \d \lambda,
\end{equation}

\noindent where $\Theta^{(\lambda)}(t)=\lambda \Theta^{(1)}(t) + (1-\lambda) \Theta^{(0)}(t)$ and
\begin{equation*}
F(t,\Theta) = \vecdos{F_1(t,\Theta)}{F_2(t,\Theta)}, \quad 
\partial_\Theta F(t,\Theta)
=\matdos{\frac{\partial F_1}{\partial{\Theta_1}} }{\frac{\partial F_1}{\partial{\Theta_2}} }{\frac{\partial F_2}{\partial{\Theta_1}} }{\frac{\partial F_2}{\partial{\Theta_2}} }.
\end{equation*}

The statement of uniqueness is given in \Cref{propo_unicidad}. We can prove it by guaranteeing that \cref{y} has only the trivial solution. In the proof we are going to apply the following lemma to \cref{y} for a generic matrix $A(t)\in \mathds R^{d\times d}$. But first we need some definitions. Let $\langle \cdot, \cdot \rangle$ be the Euclidean inner product in $\mathds R^d$ and $||\cdot||$ its corresponding norm. Let $\id$ be the unit matrix in $R^{d\times d}$.

\begin{definition}\label{def:sym}
	Let $A_1,A_2\in \mathds R^{d\times d}$ be two symmetric matrices. We say that $A_1\le A_2$ if, for the corresponding quadratic forms, $\langle A_1 \, y , y  \rangle \le \langle A_2 \, y , y  \rangle$ for all $y\in \mathds R^d$.
\end{definition}

\begin{lemma}\label{lema_A}
	Assume that, for some $\gamma<1$, the matrix $A(t)\in \mathds R^{d\times d}$ is such that $A(t) \le \gamma \id $ for each $t\in [0,\pi]$. Then, the only solution of $\ddot y + A(t) y=0$, $y\in \mathds R^d$, with Dirichlet conditions $
	y(0)=y(\pi)=0$ is the trivial one.
\end{lemma}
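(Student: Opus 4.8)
The plan is to prove this via an energy/Wirtinger-type argument combined with the spectral bound on $A(t)$. The key observation is that the condition $A(t)\le\gamma\id$ with $\gamma<1$ is precisely what is needed to make the quadratic functional associated with the equation $\ddot y+A(t)y=0$ coercive on $H^1_0(0,\pi;\mathds R^d)$, using the sharp Poincaré (Wirtinger) inequality $\int_0^\pi \|\dot y\|^2\,\d t \ge \int_0^\pi\|y\|^2\,\d t$ for functions vanishing at $0$ and $\pi$, whose optimal constant $1$ comes from the first Dirichlet eigenvalue of $-\d^2/\d t^2$ on $[0,\pi]$.

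Concretely, I would first suppose $y(t)$ is a solution of the Dirichlet problem. Take the inner product of $\ddot y + A(t)y=0$ with $y(t)$ and integrate over $[0,\pi]$; integrating by parts the term $\int_0^\pi\langle\ddot y,y\rangle\,\d t = -\int_0^\pi\|\dot y\|^2\,\d t$ (the boundary terms vanish because $y(0)=y(\pi)=0$). This yields
\begin{equation*}
\int_0^\pi \|\dot y(t)\|^2\,\d t = \int_0^\pi \langle A(t)\,y(t),y(t)\rangle\,\d t.
\end{equation*}
Next I would apply the hypothesis $A(t)\le\gamma\id$ pointwise in the sense of \Cref{def:sym}, giving $\langle A(t)y(t),y(t)\rangle \le \gamma\|y(t)\|^2$ for each $t$, hence $\int_0^\pi\|\dot y\|^2\,\d t \le \gamma\int_0^\pi\|y\|^2\,\d t$. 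Then I would invoke the Wirtinger inequality on each scalar component $y_k$ (each satisfies $y_k(0)=y_k(\pi)=0$) and sum over $k=1,\dots,d$ to obtain $\int_0^\pi\|y\|^2\,\d t \le \int_0^\pi\|\dot y\|^2\,\d t$. Chaining these, $\int_0^\pi\|\dot y\|^2\,\d t \le \gamma\int_0^\pi\|\dot y\|^2\,\d t$ with $\gamma<1$ forces $\int_0^\pi\|\dot y\|^2\,\d t=0$, so $\dot y\equiv 0$, and with $y(0)=0$ we conclude $y\equiv 0$.

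The main subtlety to handle carefully is the Wirtinger inequality itself and the fact that we only get a weak (non-strict) inequality $A(t)\le\gamma\id$ with a strict bound $\gamma<1$ on the constant, rather than a strict matrix inequality — it is the strictness of $\gamma<1$ that closes the argument, so I would emphasize that point. I should also note that the solution $y$ is $C^2$ (being a solution of a linear ODE with continuous coefficients), so the integrations by parts are legitimate without any regularity caveats. One could alternatively phrase the whole argument spectrally: the operator $-\d^2/\d t^2 - A(t)$ on $H^1_0(0,\pi;\mathds R^d)$ has a quadratic form bounded below by $(1-\gamma)\int_0^\pi\|\dot y\|^2$, which is positive definite, hence $0$ is not an eigenvalue; but the direct integration-by-parts computation above is cleaner and self-contained. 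No serious obstacle is expected here — the lemma is a standard comparison result, and the only thing to be careful about is stating the Wirtinger constant correctly for the interval $[0,\pi]$ (eigenvalue $1$, eigenfunction $\sin t$).
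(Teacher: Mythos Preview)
Your proposal is correct and follows essentially the same approach as the paper: both multiply the equation by $y$, integrate by parts using the Dirichlet conditions to obtain $\int_0^\pi\|\dot y\|^2=\int_0^\pi\langle A(t)y,y\rangle$, and combine the componentwise Wirtinger/Sobolev inequality $\int_0^\pi|y_k|^2\le\int_0^\pi|\dot y_k|^2$ with the hypothesis $A(t)\le\gamma\id$, $\gamma<1$, to force $y\equiv 0$. The paper frames this as a contradiction argument while you argue directly, but the content is the same.
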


\begin{proof}
	Proceed by contradiction. Let $y(t)$ be a non-trivial solution of $\ddot y + A(t) y=0$,  $
	y(0)=y(\pi)=0$, then,
	\begin{equation*}
	\int_0^\pi \langle \ddot y (t), y (t) \rangle + \int_0^\pi \langle A(t) y (t), y (t) \rangle =0,
	\end{equation*}
	
	\noindent integrating by parts it follows that
	\begin{equation*}
	\int_0^\pi ||\dot y (t)||^2 = \int_0^\pi \langle A(t) y (t), y (t) \rangle .
	\end{equation*}
	
	Let $y_n(t)$ be the components of the vector $y(t)$. From the Sobolev inequality $\int_0^\pi|y_n(t)|^2\le \int_0^\pi|\dot y_n(t)|^2$, see \cite{zha} or \cite{misort2020}, we get that
	\begin{equation*}
	\int_0^\pi ||y (t)||^2 \le  \int_0^\pi \langle A(t) y (t), y (t) \rangle.
	\end{equation*}
	
	\noindent This contradicts the hypothesis $A(t) \le \gamma \id $ for some $\gamma<1$. Then, $y (t)$ must be the trivial solution.	
\end{proof}

Let us define the matrix $\tilde A(t,\Theta)=\mathcal{C}^{-1/2}  \partial_\Theta F(t,\Theta)\mathcal{C}^{-1/2}$,
\begin{multline}\label{A_t_Th}
\tilde  A(t,\Theta)=\parentesis{\frac{a}{ r(t)}}^{3} 
\matdos{\frac{\Lambda_1}{\mathcal C_1} \cos \Theta_1}{0}{0}{\frac{\Lambda_2}{\mathcal C_2} \cos \Theta_2}
\\
+\parentesis{\frac{a}{ r(t)}}^{5}
\sum_{(m_1,m_2)\in \Xi } \matdos{\frac{m_1^2}{\mathcal C_1}}{\frac{m_1m_2}{\sqrt{\mathcal C_1\mathcal C_2}}}{\frac{m_1m_2}{\sqrt{\mathcal C_1\mathcal C_2}}}{\frac{m_2^2}{\mathcal C_2}} \,\Lambda_{m_2}^{m_1}	 
\cos(m_1\Theta_1+m_2 \Theta_2).
\end{multline}

We will use the maximum norm
\begin{equation*}
||y||=\max \{ |y_1|,|y_2| \},\quad y=\vecdos{y_1}{y_2},
\end{equation*}

\noindent and its induced norm in matrices
\begin{equation*}
||A||=\max \left \{|A_{11}|+|A_{12}|,|A_{21}|+|A_{22}| \right \},\quad A=\matdos{A_{11}}{A_{12}}{A_{21}}{A_{22}}.
\end{equation*}

\begin{theorem}\label{propo_unicidad}
	Assume that $e\in[0,1)$ and the parameters of the problem satisfy 
	\begin{equation}\label{cond_un}
	1>\frac{1}{(1-e)^3}\max \left \{ \frac{\Lambda_1}{\mathcal C_1}  (1 + \alpha_1) , \frac{\Lambda_2}{\mathcal C_2}  (1 + \alpha_2) \right \},
	\end{equation}
	
	\noindent where
	\begin{equation}\label{alpha}
	\alpha_j \frac{\Lambda_j}{\mathcal C_j} = \frac{1}{(1-e)^2} \sum_{(m_1,m_2)\in \Xi }\parentesis{\frac{m_j^2}{\mathcal C_j}+\frac{|m_1m_2|}{\sqrt{\mathcal C_1\mathcal C_2}}}	
	\Lambda_{m_2}^{m_1}.
	\end{equation}
	
	\noindent Then, there exists a unique solution of the Dirichlet problem \cref{Dirichlet_e}, denoted by $\Theta^*(t)$.	
\end{theorem}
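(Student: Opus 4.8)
The plan is to argue by contradiction, building directly on the reduction carried out just before the statement. Assume $\Theta^{(0)}(t)$ and $\Theta^{(1)}(t)$ are two distinct solutions of \cref{Dirichlet_e}; then $y(t)=\mathcal C^{1/2}(\Theta^{(1)}(t)-\Theta^{(0)}(t))$ is a non-trivial solution of the linear Dirichlet problem \cref{y} with the symmetric matrix $A(t)$ defined by \cref{A_int}. I would then reduce everything to \Cref{lema_A}: it suffices to produce a constant $\gamma<1$ with $A(t)\le\gamma\,\id$ for every $t\in[0,\pi]$ in the sense of \Cref{def:sym}, since the lemma would then force $y\equiv 0$, contradicting $\Theta^{(0)}\ne\Theta^{(1)}$ and yielding uniqueness.

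The next step is to pass from the implicit formula \cref{A_int} to the explicit matrix \cref{A_t_Th}. Multiplying \cref{A_int} on both sides by $\mathcal C^{-1/2}$ and using the definition $\tilde A(t,\Theta)=\mathcal C^{-1/2}\partial_\Theta F(t,\Theta)\mathcal C^{-1/2}$, one gets $A(t)=\int_0^1\tilde A(t,\Theta^{(\lambda)}(t))\,\d\lambda$, an average of symmetric matrices, hence symmetric. Then I would invoke the standard fact that for a symmetric $M\in\mathds R^{2\times2}$ the quadratic form satisfies $\langle My,y\rangle\le\rho(M)\,\|y\|^2\le\|M\|\,\|y\|^2$, where $\|\cdot\|$ is the maximum (row-sum) matrix norm fixed above and $\rho$ the spectral radius, using that $\rho$ never exceeds an induced matrix norm. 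Consequently $A(t)\le\big(\sup_{t,\Theta}\|\tilde A(t,\Theta)\|\big)\,\id$, and the whole problem reduces to a uniform entrywise bound on \cref{A_t_Th}.

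That bound is obtained by inspection of \cref{A_t_Th}. Since $r(t)=a(1-e\cos u)\ge a(1-e)$, we have $(a/r(t))^3\le(1-e)^{-3}$ and $(a/r(t))^5\le(1-e)^{-5}$; combining this with $|\cos(\cdot)|\le1$ and the positivity of the $\Lambda_j$ and $\Lambda_{m_2}^{m_1}$, the absolute sum of the $j$-th row of $\tilde A(t,\Theta)$ is at most $(1-e)^{-3}\frac{\Lambda_j}{\mathcal C_j}+(1-e)^{-5}\sum_{(m_1,m_2)\in\Xi}\big(\frac{m_j^2}{\mathcal C_j}+\frac{|m_1m_2|}{\sqrt{\mathcal C_1\mathcal C_2}}\big)\Lambda_{m_2}^{m_1}$, which equals $(1-e)^{-3}\frac{\Lambda_j}{\mathcal C_j}(1+\alpha_j)$ by the very definition \cref{alpha} of $\alpha_j$. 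Hence $\sup_{t,\Theta}\|\tilde A(t,\Theta)\|\le(1-e)^{-3}\max\{\frac{\Lambda_1}{\mathcal C_1}(1+\alpha_1),\frac{\Lambda_2}{\mathcal C_2}(1+\alpha_2)\}=:\gamma$, and this $\gamma$ is $<1$ exactly by hypothesis \cref{cond_un}. Applying \Cref{lema_A} with this $\gamma$ then closes the contradiction.

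I do not expect a genuine obstacle: the argument is short and the estimates are elementary. The one point that needs care is that \Cref{lema_A} is phrased through the quadratic-form ordering of \Cref{def:sym}, whereas the natural control on \cref{A_t_Th} is entrywise; bridging the two requires the symmetry of $A(t)$ (so that $\langle A(t)y,y\rangle\le\rho(A(t))\|y\|^2$) together with $\rho(A(t))\le\|A(t)\|$ for the chosen induced norm. The remaining effort is just bookkeeping of the $(1-e)^{-3}$ and $(1-e)^{-5}$ factors so that the final bound lands on precisely the form \cref{cond_un}.
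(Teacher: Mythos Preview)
Your proposal is correct and follows essentially the same route as the paper: bound the maximum (row-sum) norm of $\tilde A(t,\Theta)$ uniformly by the right-hand side of \cref{cond_un} using $a/r\le(1-e)^{-1}$, pass from the norm bound to the quadratic-form ordering via $\rho(\tilde A)\le\|\tilde A\|$, average over $\lambda$ to get $A(t)\le\gamma\,\id$ with $\gamma<1$, and then invoke \Cref{lema_A}. The paper's proof is more compressed but the logic and the key inequalities are identical.
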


\begin{proof}
	Using the fact that $a/r\le 1/(1-e)$ by \cref{r}, equations \cref{cond_un} and \cref{alpha} imply that $1>||\tilde A(t,\Theta)||$ for all $(t,\Theta)\in \mathds R^3$, where we use the maximum norm. Furthermore, if $\rho (A)$ is the spectral radius of $A$, the well known inequality $ ||\tilde A(t,\Theta)||\ge \rho (\tilde A(t,\Theta))$ guarantees that $ \gamma \mathds 1\ge \tilde A(t,\Theta)$ for some $\gamma<1$. Then, $ \gamma \mathds 1\ge  A(t)$ for $A(t)$ defined in \cref{A_int}.  Now a direct application of \Cref{lema_A} finishes the proof.
\end{proof}

\begin{remark}	
	Note that, as in the spin-orbit problem, there are two special cases for which $\Theta^*$ can be computed explicitly for some combination of parameters satisfying \cref{cond_un}. If $\Lambda_j=0$ and $\Lambda_{m_2}^{m_1}=0$ for $m_1m_2\ne0$, for each $e\in(0,1)$ the solution is the synchronous resonance of the uncoupled system
	\begin{equation}
	\Theta^*(t)=2(t-f(t,e))\vecdos{1}{1}.
	\end{equation}
	
	\noindent On the other hand, if $e=0$ the solution is $\Theta^*(t)=0$.
\end{remark}

\subsection{Linear stability of the solution}

Now we are interested in the stability properties of the solution $\Theta^*(t)$, which should be seen as $2\pi$-periodic and odd from now on. In the following we will find a region of parameters guaranteeing stability of the (scaled) linearized system of \cref{spin_spin_mayus} at the periodic solution $\Theta^*$, say,
\begin{equation}\label{variacional_Theta}
\ddot y + A(t) y=0,
\end{equation}
\noindent where we take the symmetric matrix $A(t)$ now defined by 
\begin{equation*}
A(t)=\tilde A(t,\Theta^*(t))=\mathcal{C}^{-1/2}  \partial_\Theta F(t,\Theta^*(t))\mathcal{C}^{-1/2},
\end{equation*}
\noindent and $\tilde A(t,\Theta)$ was defined in \cref{A_t_Th}. 

Recall from \Cref{sub:model} that the \textit{conservative} spin-spin model has a time-dependent Hamiltonian structure given by \cref{ham}. The variational equations associated to periodic solutions, like \cref{variacional_Theta}, are linear Hamiltonian systems with periodic coefficients. We will abbreviate them by LPH systems\footnote{The linear system $\mathcal C \ddot y + \partial_\Theta F(t,\Theta^*(t)) y=0$ is an LPH system in the general sense. However, for simplicity, we particularize the general theory to \cref{variacional_Theta}.}. These systems have some special properties that we will use in the following. For the general theory see \cite{yak1975} or \cite{eke}. For example, assume that $\varphi$ is a Floquet multiplier of an LPH system. Then, its inverse $\varphi^{-1}$, its complex conjugate $\bar \varphi$ and $\bar \varphi^{-1}$ are also  multipliers and have the same multiplicity as $\varphi$. This is stated in Corollary 6 of Chapter 1.1 of \cite{eke}. Let us point out two interesting consequences. First, a necessary condition for stability of an LPH system is that all its Floquet multipliers must have modulus 1. Second, an LPH system can never be asymptotically stable. In order to do continuation of periodic solutions to the \textit{dissipative} regime we will need the concept of strong stability for LPH systems.

\begin{definition}
	Let $A_0(t)\in \mathds R^{d\times d}$ be a fixed symmetric and $T$-periodic matrix. Assume that the there exists a number $\varepsilon>0$ such that the equation $\ddot y +A_*(t)y=0$ is stable for all $A_*(t)\in \mathds R^{d\times d}$ symmetric and $T$-periodic satisfying $\int_0^T||A_*(t)-A_0(t)||<\varepsilon$. Then, $\ddot y + A_0(t) y=0$ is strongly stable.
\end{definition}
In other words, if an LPH system is strongly stable, then, any  sufficiently small perturbation of it is stable. The perturbation should keep the Hamiltonian structure. Let us illustrate this with an example of the so-called Mathieu equation. Consider the $2\pi$-periodic equation
\begin{equation*}
\ddot x + \frac{1}{4} (1+\epsilon \cos t)x=0,\quad x\in \mathds R.
\end{equation*}
\noindent For $\epsilon=0$ it is stable, but not strongly stable, because we can always find a small number $\epsilon\ne 0$ such that the corresponding equation is not stable. This is called parametric resonance, see \cite{arn}.

Strong stability can be characterized with the Floquet multipliers of the system. For example, take an LPH system whose multipliers belong to the unit circle. If the multiplicity of all the multipliers is one, then the system is strongly stable. However, the converse is not true. M. Krein developed a theory to determine if a system is strong stable with further algebraic properties of the multipliers. For our purpose of making continuation of periodic solutions the following property is relevant.

\begin{proposition}\label{propo:1}
	Assume that $\ddot y + A(t)y=0$, with $A(t)\in \mathds R^{d\times d}$ symmetric and $T$-periodic, is strongly stable. Then, neither $1$ nor $-1$ are Floquet multipliers of the system.
\end{proposition}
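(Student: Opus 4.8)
The plan is to argue by contrapositive: I will show that if $1$ (or $-1$) is a Floquet multiplier of $\ddot y + A(t)y = 0$, then the system fails to be strongly stable, i.e.\ there is an arbitrarily small symmetric $T$-periodic perturbation of $A(t)$ making the perturbed system unstable. The key structural fact is that $\pm 1$ being a multiplier means the monodromy matrix $X(T)$ of the associated first-order Hamiltonian system has $\pm 1$ as an eigenvalue, which is precisely the borderline case where eigenvalues can split off the unit circle under perturbation.

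First I would pass to the first-order form: writing $z = (y,\dot y)$, the equation becomes $\dot z = J H(t) z$ with $J$ the standard symplectic matrix and $H(t) = \diag(A(t),\id)$ symmetric, $T$-periodic; let $M = X(T)$ be the monodromy matrix, which is symplectic. Suppose $1$ is a Floquet multiplier, so $1 \in \operatorname{spec}(M)$. The idea is to produce a one-parameter family $A_\epsilon(t) = A(t) + \epsilon B(t)$ with $B$ symmetric, $T$-periodic, and $\int_0^T \|B\| $ as small as we like, such that the monodromy matrix $M_\epsilon$ of the perturbed system has an eigenvalue leaving the unit circle (either a real eigenvalue moving off $1$ along the real axis, giving a multiplier of modulus $>1$, or a Jordan block at $1$ being split — the latter already forcing polynomial growth hence instability even before perturbing). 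Concretely, I would choose $B(t)$ supported (as a $T$-periodic bump) so that the first-variation of $M_\epsilon$ in $\epsilon$, which by the standard formula equals $M \int_0^T X(s)^{-1} J\,\delta H(s)\, X(s)\, ds$ with $\delta H = \diag(B,0)$, moves the eigenvalue $1$ outward; since the set of symmetric $B$ is large, such a direction exists unless the eigenvalue $1$ is ``rigid,'' and for a symplectic matrix the eigenvalue $1$ is never rigid in this sense (its Krein signature is indefinite or it sits in a nontrivial Jordan block). Hence the perturbed system is unstable for small $\epsilon \ne 0$, contradicting strong stability. The case $-1$ is identical, replacing $M$ by $-M$ or equivalently working on the doubled period $2T$.

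The main obstacle — and the place I would be most careful — is justifying that one can always find an admissible perturbation direction that actually pushes the eigenvalue off the circle, rather than merely moving it along the circle. This is where the symplectic (Hamiltonian) structure is essential and where Krein's theory enters: for a symplectic monodromy matrix, an eigenvalue on the unit circle is stably on the circle only if it has definite Krein signature, but $\pm 1$ is a real eigenvalue and a real eigenvalue of a symplectic matrix can never have definite Krein signature (the symplectic form restricted to its generalized eigenspace, paired against the eigenspace of the reciprocal eigenvalue $1/(\pm1) = \pm 1$, is a form that cannot be sign-definite on an odd-dimensional-over-$\mathbb{R}$ real eigenspace, or more simply: $1$ is forced to have even algebraic multiplicity and the associated quadratic form is indefinite). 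I would either cite this directly from \cite{eke} or \cite{yak1975} (the Krein–Gelfand–Lidskii theory), or, if a self-contained argument is wanted, reduce to the $2$-dimensional symplectic (area-preserving) case: there $M$ has $\det M = 1$, so eigenvalues $\{1,1\}$ correspond to $\operatorname{tr} M = 2$, and an arbitrarily small symmetric perturbation of the Mathieu-type equation pushes $\operatorname{tr} M_\epsilon$ above $2$, producing a real pair $\{\varphi, \varphi^{-1}\}$ with $|\varphi|>1$ — exactly the parametric-resonance phenomenon already illustrated with the Mathieu equation in the text. The general $d$-dimensional case then follows by restricting the perturbation to act nontrivially only on a $2$-dimensional symplectic subspace associated with the eigenvalue $1$ while leaving a complementary invariant symplectic subspace essentially untouched.

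I would close by noting the one subtlety to double-check: that the perturbation $\delta H = \diag(B,0)$ stays within the class considered by the definition of strong stability, i.e.\ it keeps the ``$\ddot y + A_*(t)y = 0$'' form with $A_*$ symmetric $T$-periodic — which it does, since $B$ is chosen symmetric and $T$-periodic — and that $\int_0^T \|A_\epsilon - A\| = \epsilon \int_0^T \|B\| \to 0$, so the perturbation is indeed admissible and small. This makes the contradiction with strong stability rigorous and completes the proof.
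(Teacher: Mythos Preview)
The paper does not actually give a proof of this proposition: it states explicitly that ``we will not prove this property because it is a particular result of the general theory,'' and simply refers the reader to the paragraph preceding Theorem 10 in Chapter 1.2 of \cite{eke}, i.e.\ to the Krein--Gelfand--Lidskii characterization of strong stability. Your proposal therefore does substantially more than the paper: you sketch an actual argument, and the core of it --- that $\pm 1$, being real eigenvalues of the symplectic monodromy, can never carry definite Krein signature and hence the system cannot be strongly stable --- is exactly the content of the cited Krein theory and is correct.

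Where your write-up is weaker is in the attempt to make the perturbation explicit. Two points deserve care. First, the class of admissible perturbations in the definition of strong stability here is not all Hamiltonian perturbations but only those of the form $\delta H = \diag(B,0)$ with $B$ symmetric; your claim that ``since the set of symmetric $B$ is large, such a direction exists'' is plausible but not argued, and the standard Krein statements are usually formulated for the full Hamiltonian class, so one should either cite the version in \cite{yak1975} that is adapted to equations $\ddot y + A(t)y=0$ or say a word about why the restricted class suffices. Second, the proposed reduction to a $2$-dimensional symplectic subspace by ``restricting the perturbation to act nontrivially only'' there does not translate cleanly: you perturb $A(t)$, not the monodromy $M$ directly, and the map $B \mapsto \delta M$ is a time-integral against the fundamental solution, so there is no obvious way to localize its effect to a chosen invariant subspace of $M$.

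The cleanest fix is to drop the explicit construction and argue as you yourself indicate midway: invoke the Krein--Gelfand--Lidskii theorem (strong stability $\Leftrightarrow$ all multipliers on the unit circle with definite Krein type), then observe that for a real eigenvector $v$ at the real eigenvalue $\pm 1$ one has $\langle J v, v\rangle = 0$, so the Krein form is degenerate there and definiteness fails. That is precisely what the paper's citation to \cite{eke} is pointing at, and it finishes the proof in one line.
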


We will not prove this property because it is a particular result of the general theory. Nonetheless, it can be inferred by the paragraph previous to Theorem 10 in Chapter 1.2 of \cite{eke}, that is the main result of Krein's theory.

Some sufficient conditions for strong stability of \cref{variacional_Theta} are given by the following Lyapunov-like stability criterion, from Test 4, in \cite{yak1975}, Chapter III, Section 7.

\begin{stability}
	The equation $\ddot y + A(t)y=0$, with $A(t)\in \mathds R^{d\times d}$ symmetric and $2\pi$-periodic, is strongly stable provided that, for all $x\in \mathds R^d \backslash\{0\}$,
	\begin{equation}\label{cond_lin_est}
	\int_0^{2\pi} \langle A(t)x,x \rangle \d t >0 \quad \text{and} \quad \int_0^{2\pi} \Tr (A(t))\d t <\frac{2}{\pi}.
	\end{equation}
\end{stability}

This stability test is the main tool for the proof of the next theorem.

\begin{theorem}\label{th_estabilidad}
	Assume that the parameters of the model satisfy the following conditions.
	\begin{equation}\label{cond_lin_est1}
	\frac{1}{\pi^2}>
	\frac{1}{(1-e)^3} \parentesis{\frac{\Lambda_1}{\mathcal C_1}+\frac{\Lambda_2}{\mathcal C_2}} 
	+ \frac{1}{(1-e)^5} \sum_{(m_1,m_2)\in \Xi }\parentesis{\frac{m_1^2}{\mathcal C_1}+\frac{m_2^2}{\mathcal C_2}}	
	\Lambda_{m_2}^{m_1},
	\end{equation}
	\begin{equation}\label{cond_lin_est2}
	\frac{1}{4\pi} > M := \frac{1}{(1-e)^3} \max \left \{\frac{\Lambda_1}{\mathcal C_1},\frac{\Lambda_2}{\mathcal C_2} \right \}
	+\frac{1}{(1-e)^5} \sum_{(m_1,m_2)\in \Xi }\max \left \{\frac{|m_1|}{\mathcal C_1},\frac{|m_2|}{\mathcal C_2} \right \} \Lambda_{m_2}^{m_1} + \frac{4e\sqrt{1-e^2}}{(1-e)^4},
	\end{equation}
	\begin{equation}\label{cond_lin_est3}
	\cos (2\pi^2 M ) \min \left \{\frac{\Lambda_1}{\mathcal C_1},\frac{\Lambda_2}{\mathcal C_2} \right \} > \max \left \{\alpha_1\frac{\Lambda_1}{\mathcal C_1},\alpha_2 \frac{\Lambda_2}{\mathcal C_2} \right \}
	,
	\end{equation}
	
	\noindent with $\alpha_j$ defined in \cref{alpha}. Then the solution $\Theta^*(t)$ is strongly linearly stable.
\end{theorem}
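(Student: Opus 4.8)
The strategy is to verify the two hypotheses of the Stability test for the matrix $A(t) = \tilde A(t,\Theta^*(t))$ defined in \cref{A_t_Th}, namely positivity of $\int_0^{2\pi}\langle A(t)x,x\rangle\,dt$ for every nonzero $x\in\mathds R^2$, and the trace bound $\int_0^{2\pi}\Tr(A(t))\,dt < 2/\pi$. Condition \cref{cond_lin_est2} will be used first to control how far the periodic solution $\Theta^*$ departs from $0$: since $\Theta^*$ solves the Dirichlet problem \cref{Dirichlet_e} and is odd and $2\pi$-periodic, the integral bound $\|\vartheta(t,v)-vt\|\le M t^2/2$ together with $\Theta^*(0)=\Theta^*(\pi)=0$ gives $\|\dot\Theta^*(0)\|\le \pi M/2$ (from the Brouwer argument already in the text), and then $\|\Theta^*(t)\|\le 2\pi^2 M$ on $[0,\pi]$ — actually one gets the sharper $\|\Theta^*(t)\|\le \pi^2 M/2$ type bound; the quantity $2\pi^2 M$ is what is needed so that $\cos\|\Theta^*(t)\|\ge \cos(2\pi^2 M)>0$, which is why \cref{cond_lin_est2} forces $M<1/(4\pi)$ so the argument of the cosine stays below $\pi/2$. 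The term $4e\sqrt{1-e^2}/(1-e)^4$ in \cref{cond_lin_est2} comes from bounding $|\ddot f(t)|$ using \cref{f,t}: $\ddot f$ contributes to $F$ through the $2\ddot f(t)(\mathcal C_1,\mathcal C_2)^{\!\top}$ term, hence to the size of $\Theta^*$.

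Next, with the pointwise bound $\|\Theta^*(t)\|\le 2\pi^2 M$ in hand, I would examine the quadratic form $\langle A(t)x,x\rangle$. Writing $A(t)$ as in \cref{A_t_Th} with $\Theta=\Theta^*(t)$, the diagonal $(a/r)^3$-part contributes $(\Lambda_1/\mathcal C_1)\cos\Theta_1^* x_1^2 + (\Lambda_2/\mathcal C_2)\cos\Theta_2^* x_2^2$, which is bounded below by $\cos(2\pi^2 M)\min\{\Lambda_1/\mathcal C_1,\Lambda_2/\mathcal C_2\}\|x\|^2$ in the Euclidean norm (using $\cos\Theta_j^*\ge\cos(2\pi^2 M)>0$). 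The $(a/r)^5$-coupling part is a sum of rank-one matrices $(m_1,m_2)^{\!\top}(m_1,m_2)$ scaled by $\mathcal C^{-1/2}$ and by $\Lambda_{m_2}^{m_1}\cos(m_1\Theta_1^*+m_2\Theta_2^*)$; since $a/r\le 1/(1-e)$, its quadratic form is bounded in absolute value by $\alpha_j$-type sums, and pairing the $(m_1,m_2)$ and $(-m_1,-m_2)$ terms (which carry equal $\Lambda$) keeps things real. Integrating over $[0,2\pi]$ and using that the negative contributions are dominated, condition \cref{cond_lin_est3} is exactly what guarantees $\int_0^{2\pi}\langle A(t)x,x\rangle\,dt>0$: the good lower bound $\cos(2\pi^2 M)\min\{\Lambda_j/\mathcal C_j\}$ beats the worst-case coupling bound $\max\{\alpha_j\Lambda_j/\mathcal C_j\}$.

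For the trace condition, I would compute $\Tr A(t) = (a/r)^3\big(\tfrac{\Lambda_1}{\mathcal C_1}\cos\Theta_1^* + \tfrac{\Lambda_2}{\mathcal C_2}\cos\Theta_2^*\big) + (a/r)^5\sum_{\Xi}\big(\tfrac{m_1^2}{\mathcal C_1}+\tfrac{m_2^2}{\mathcal C_2}\big)\Lambda_{m_2}^{m_1}\cos(m_1\Theta_1^*+m_2\Theta_2^*)$. Bounding each cosine by $1$ and $a/r$ by $1/(1-e)$ gives $\int_0^{2\pi}\Tr A(t)\,dt \le 2\pi\big[\tfrac{1}{(1-e)^3}(\tfrac{\Lambda_1}{\mathcal C_1}+\tfrac{\Lambda_2}{\mathcal C_2}) + \tfrac{1}{(1-e)^5}\sum_\Xi(\tfrac{m_1^2}{\mathcal C_1}+\tfrac{m_2^2}{\mathcal C_2})\Lambda_{m_2}^{m_1}\big]$, and condition \cref{cond_lin_est1} is precisely the statement that this is $<2/\pi$. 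Having verified both parts of \cref{cond_lin_est}, the Stability test yields strong stability of \cref{variacional_Theta}, which by definition is strong linear stability of $\Theta^*$.

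\textbf{Main obstacle.} The delicate step is the \emph{a priori} bound $\|\Theta^*(t)\|\le 2\pi^2 M$ on $[0,\pi]$ (and hence, by oddness and $2\pi$-periodicity, the control of $\cos\Theta_j^*(t)$ everywhere): one must carefully track the constant, correctly estimate $\sup_t|\ddot f(t)|\le 2e\sqrt{1-e^2}/(1-e)^4$ from the Kepler relations \cref{t,f}, and make sure the bound on $\|\dot\Theta^*(0)\|$ from the integral equation \cref{integral_eq} propagates to a uniform bound on $\Theta^*$ over the half-period. Once that pointwise bound is secured, conditions \cref{cond_lin_est1}--\cref{cond_lin_est3} are engineered to make the two inequalities in \cref{cond_lin_est} hold, so the remaining work is the routine norm estimates on the rank-one coupling terms and assembling everything. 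Note the condition \cref{cond_lin_est1} implies the uniqueness condition \cref{cond_un} of \Cref{propo_unicidad} (since $1/\pi^2 < 1$ and the right-hand side of \cref{cond_lin_est1} dominates that of \cref{cond_un} after using $(1-e)^{-5}\ge(1-e)^{-3}\cdot(1-e)^{-2}$), so $\Theta^*$ is well defined throughout.
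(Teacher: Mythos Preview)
Your overall approach matches the paper's: verify the two hypotheses of the Stability test for $A(t)=\tilde A(t,\Theta^*(t))$, with \cref{cond_lin_est1} handling the trace bound and \cref{cond_lin_est2}--\cref{cond_lin_est3} handling positivity via an a~priori amplitude bound on $\Theta^*$. The paper organizes the positivity part into two lemmas: \Cref{lema_bound_sol} for the amplitude bound $|\Theta_j^*(t)|\le 2\pi^2 M$, and \Cref{lema_def_pos} showing that this together with \cref{cond_lin_est3} forces $A(t)\ge\gamma\id$ pointwise (not merely after integration), via the spectral-radius estimate $\rho(B)\le\|B\|$ on the $(a/r)^5$ block.

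The one genuine methodological difference is in how the amplitude bound is obtained. You propose to extract $\|\dot\Theta^*(0)\|\le\pi M/2$ from the Dirichlet condition $\Theta^*(\pi)=0$ and the integral equation \cref{integral_eq}, and then propagate. The paper instead uses $2\pi$-periodicity directly: each component $\dot\Theta_j^*$ must vanish at some $t_0$, whence $|\dot\Theta_j^*|\le 2\pi M$ everywhere and $|\Theta_j^*|\le 2\pi^2 M$ on $[0,\pi]$. Your route actually yields the sharper bound $\|\Theta^*(t)\|\le\pi^2 M$ (not $\pi^2 M/2$ as you wrote), which is more than enough; the paper's argument is cruder but avoids any reference to the initial velocity or to the Brouwer step.

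One caveat: your closing remark that \cref{cond_lin_est1} implies the uniqueness condition \cref{cond_un} is not correct in general. The cross term $\frac{|m_1m_2|}{\sqrt{\mathcal C_1\mathcal C_2}}$ in $\alpha_j$ can exceed $\frac{m_{3-j}^2}{\mathcal C_{3-j}}$ when $\mathcal C_1$ and $\mathcal C_2$ are very unequal, so the right-hand side of \cref{cond_un} is not always dominated by that of \cref{cond_lin_est1}. This does not affect the proof of the theorem itself, since the argument works for any odd $2\pi$-periodic solution of \cref{Dirichlet_e}; but you should not claim that uniqueness follows automatically from the stability hypotheses.
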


Note that the second condition of \cref{cond_lin_est} is guaranteed by \cref{cond_lin_est1}. The first condition of \cref{cond_lin_est} is a bit more complicated, but its proof is immediate by the following two lemmas.

\begin{lemma}\label{lema_bound_sol}
	The components of the solution $\Theta^*(t)$ satisfy the following bounds $|\Theta^*_j(t)|\le 2 \pi^2 M$, $|\dot \Theta^*_j(t)|\le 2 \pi M$ provided that $M\ge ||\mathcal C^{-1} F(t,\Theta^*(t))||$.
\end{lemma}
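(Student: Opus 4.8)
The plan is to exploit the integral equation \cref{integral_eq} satisfied by $\vartheta(t,v)$, applied to the particular solution $\Theta^*(t)=\vartheta(t,v_0)$ whose existence and uniqueness are provided by \Cref{propo_unicidad}. Writing $\Theta^*$ in the form \cref{integral_eq} with $v=v_0=\dot\Theta^*(0)$ and imposing the Dirichlet condition $\Theta^*(\pi)=0$ gives an explicit formula for $v_0$ in terms of the right-hand side $\mathcal C^{-1}F(s,\Theta^*(s))$. The first step is therefore to solve $\vartheta(\pi,v_0)=0$ for $v_0$, obtaining
\begin{equation*}
v_0 = \frac{1}{\pi}\int_0^\pi (\pi-s)\,\mathcal C^{-1}F(s,\Theta^*(s))\,\d s,
\end{equation*}
and to bound its norm by $M\int_0^\pi (\pi-s)/\pi\, \d s = M\pi/2$, using the hypothesis $M\ge\|\mathcal C^{-1}F(t,\Theta^*(t))\|$. (For the velocity bound one can equivalently use $\dot\vartheta(t,v) = v - \int_0^t \mathcal C^{-1}F(s,\vartheta(s,v))\,\d s$.)

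Next I would substitute this expression for $v_0$ back into \cref{integral_eq} to get, for $t\in[0,\pi]$,
\begin{equation*}
\Theta^*(t) = \frac{t}{\pi}\int_0^\pi (\pi-s)\,\mathcal C^{-1}F(s,\Theta^*(s))\,\d s - \int_0^t (t-s)\,\mathcal C^{-1}F(s,\Theta^*(s))\,\d s,
\end{equation*}
which is nothing but the Green's function representation of the solution of the Dirichlet problem $\mathcal C\ddot\Theta + F=0$, $\Theta(0)=\Theta(\pi)=0$. The kernel is the standard Green's function $G(t,s)$ for $-\ddot{}$ on $[0,\pi]$ with Dirichlet boundary conditions, which is nonnegative and satisfies $\int_0^\pi G(t,s)\,\d s = t(\pi-t)/2 \le \pi^2/8$ and $\int_0^\pi |\partial_t G(t,s)|\,\d s \le \pi/2$. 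Taking norms componentwise and using $\|\mathcal C^{-1}F\|\le M$ then yields $\|\Theta^*(t)\|\le M\,t(\pi-t)/2 \le M\pi^2/8$ and $\|\dot\Theta^*(t)\|\le M\pi/2$; since the stated bounds $2\pi^2 M$ and $2\pi M$ are weaker, they follow a fortiori. The periodicity/oddness extends the bounds from $[0,\pi]$ to all of $\mathbb R$.

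The only mildly delicate point is bookkeeping with the vector/matrix norms: one must be careful that the maximum-norm estimate $\|\mathcal C^{-1}F(t,\Theta^*(t))\|\le M$ is applied consistently inside the integrals, and that "$\le M$" is interpreted componentwise so that the Green's-function bound passes through each coordinate. This is routine. I do not expect a genuine obstacle here; the lemma is essentially an a priori estimate for a linear Dirichlet problem with bounded right-hand side, and the constants $2\pi^2$ and $2\pi$ are deliberately generous (the sharp constants being $\pi^2/8$ and $\pi/2$), so there is ample slack. The role of this lemma in the larger argument is to guarantee, via \cref{cond_lin_est2}, that $|\Theta^*_j(t)|\le 2\pi^2 M < \pi/2$, so that $\cos\Theta^*_j(t)>0$ and more precisely $\cos\Theta^*_j(t)\ge\cos(2\pi^2 M)$, which is exactly what is needed to make the quadratic-form positivity condition in the stability test \cref{cond_lin_est} verifiable through \cref{cond_lin_est3}.
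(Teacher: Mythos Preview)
Your argument is correct, but it is genuinely different from the paper's. The paper does not use the Green's function of the Dirichlet problem at all: it exploits the $2\pi$-periodicity of $\Theta^*$ to pick, for each component separately, a time $t_0$ at which $\dot\Theta_j^*(t_0)=0$, integrates $\ddot\Theta^*=-\mathcal C^{-1}F$ over one full period to get $|\dot\Theta_j^*(t)|\le 2\pi M$, and then integrates once more from $t=0$ (where $\Theta^*(0)=0$) over $[0,\pi]$, invoking oddness, to obtain $|\Theta_j^*(t)|\le \pi\cdot 2\pi M = 2\pi^2 M$. Your approach instead writes $\Theta^*$ on $[0,\pi]$ via the Dirichlet Green's function and reads off the bounds from $\int_0^\pi G(t,s)\,\d s = t(\pi-t)/2$ and $\int_0^\pi|\partial_t G(t,s)|\,\d s\le \pi/2$. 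This gives the sharper constants $\pi^2/8$ and $\pi/2$ (better than the paper's by factors of $16$ and $4$, respectively), which you then relax to the stated $2\pi^2 M$ and $2\pi M$. The paper's route is more elementary and uses only periodicity; yours is slightly more structured and, had the tighter constants been kept, would enlarge the parameter region in \Cref{th_estabilidad} through \cref{cond_lin_est2} and \cref{cond_lin_est3}.
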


\begin{proof}
	Integrating the identity $\ddot\Theta^*(t)+\mathcal C^{-1} F(t,\Theta^*(t))=0$ and taking the first component,
	\begin{equation*}
	\dot\Theta_1^*(t) = \dot\Theta_1^*(t_0) -\int_{t_0}^t u_1 \, \mathcal C^{-1} F(s,\Theta^*(s)) \d s,
	\end{equation*}
	
	\noindent where $u_1$ is the row vector $(1, 0)$. Then, for $t\in[t_0,t_0+2\pi]$,
	\begin{equation*}
	|\dot\Theta_1^*(t)|  \le |\dot\Theta_1^*(t_0)| + \int_{t_0}^{t_0+2\pi} ||\mathcal C^{-1} F(s,\Theta^*(s))||\d s \le  |\dot\Theta_1^*(t_0)| +2\pi M,
	\end{equation*}
	
	\noindent where $||\cdot ||$ indicates a matrix norm induced by a norm in $\mathds R^2$. Since $\Theta_1^*(t)$ is $2\pi$-periodic, we can choose $t_0$ such that $ \dot\Theta_1^*(t_0)=0$. The same is applicable to $\Theta_2$ for a possibly different $t_0$, consequently, $|\dot\Theta_j^*(t)|  \le 2\pi M$ for all $t$. Furthermore, since $\Theta_1^*(0)=0$,
	\begin{equation*}
	\Theta_1^*(t) = \int_{0}^t \dot\Theta_1^*(s) \d s,
	\end{equation*}
	
	\noindent and, due to the odd symmetry of $\Theta_1^*(t)$, it is enough to consider $t\in[0,\pi]$. Then, $|\Theta_1^*(t)|\le 2\pi^2M$. The same is true for $\Theta_2^*(t)$.
\end{proof}

\begin{lemma}\label{lema_def_pos}
	The conditions \cref{cond_lin_est2} and \cref{cond_lin_est3} are sufficient so that  $A(t)=\tilde A(t,\Theta^*(t))\ge \gamma \id$ for some $\gamma>0$.
\end{lemma}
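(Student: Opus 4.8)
The plan is to bound the quadratic form $\langle \tilde A(t,\Theta^*(t))x,x\rangle$ from below, uniformly in $t\in\mathds R$ and $x\in\mathds R^2$, after splitting $\tilde A$ as in \cref{A_t_Th} into its diagonal $1/r^3$ part, which is positive definite along $\Theta^*$, and its sign--indefinite coupling $1/r^5$ part. The first step is to control the arguments $\Theta^*_j(t)$. Condition \cref{cond_lin_est2} is tailored so that the displayed constant $M$ dominates $\|\mathcal C^{-1}F(t,\Theta^*(t))\|$ in the maximum norm: estimating the three terms of \cref{F_Theta} componentwise, using $a/r(t)\le 1/(1-e)$ from \cref{r} together with the elementary Keplerian bound $|\ddot f(t)|\le 2e\sqrt{1-e^2}/(1-e)^4$ (which follows from \cref{t,r,f} since $\dot f=a^2\sqrt{1-e^2}/r^2$, $\dot r=a^2 e\sin u/r$, hence $\ddot f=-2a^4 e\sqrt{1-e^2}\sin u/r^4$), gives $\|\mathcal C^{-1}F(t,\Theta^*(t))\|\le M$. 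Then \Cref{lema_bound_sol} yields $|\Theta^*_j(t)|\le 2\pi^2 M$ for all $t$, and since \cref{cond_lin_est2} also forces $M<1/(4\pi)$, we have $2\pi^2 M<\pi/2$, so $\cos\Theta^*_j(t)\ge\cos(2\pi^2 M)>0$ for every $t$.

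Next I would estimate the two parts of the quadratic form with $x=(x_1,x_2)$. The diagonal $1/r^3$ part contributes at least $(a/r(t))^3\cos(2\pi^2 M)\min_j(\Lambda_j/\mathcal C_j)(x_1^2+x_2^2)$. For the $1/r^5$ part, the $(m_1,m_2)$--summand evaluates the square $\left(\tfrac{m_1 x_1}{\sqrt{\mathcal C_1}}+\tfrac{m_2 x_2}{\sqrt{\mathcal C_2}}\right)^2$ with weight $\Lambda_{m_2}^{m_1}\cos(m_1\Theta^*_1+m_2\Theta^*_2)$. Bounding this square by $\left(\tfrac{m_1^2}{\mathcal C_1}+\tfrac{|m_1 m_2|}{\sqrt{\mathcal C_1\mathcal C_2}}\right)x_1^2+\left(\tfrac{m_2^2}{\mathcal C_2}+\tfrac{|m_1 m_2|}{\sqrt{\mathcal C_1\mathcal C_2}}\right)x_2^2$ (via $(a+b)^2\le(|a|+|b|)^2$ and $2|x_1x_2|\le x_1^2+x_2^2$), using $|\cos|\le 1$ and $\Lambda_{m_2}^{m_1}\ge 0$, summing over $\Xi$, and reading off the definition \cref{alpha} of $\alpha_j$, the $1/r^5$ part is at most $(a/r(t))^5(1-e)^2\big(\alpha_1\tfrac{\Lambda_1}{\mathcal C_1}x_1^2+\alpha_2\tfrac{\Lambda_2}{\mathcal C_2}x_2^2\big)$ in absolute value. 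The key observation is that $(a/r(t))^5(1-e)^2=(a/r(t))^3\,(a(1-e)/r(t))^2\le (a/r(t))^3$ because $r(t)\ge a(1-e)$; hence both contributions share the common positive factor $(a/r(t))^3$ and
\begin{equation*}
\langle \tilde A(t,\Theta^*(t))x,x\rangle\ \ge\ \parentesis{\frac{a}{r(t)}}^{3}\Big(\cos(2\pi^2 M)\min_j\frac{\Lambda_j}{\mathcal C_j}-\max_j\alpha_j\frac{\Lambda_j}{\mathcal C_j}\Big)(x_1^2+x_2^2).
\end{equation*}

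Finally, condition \cref{cond_lin_est3} makes the bracket a strictly positive constant $2c>0$, and since $a/r(t)\ge 1/(1+e)$ with $e\in[0,1)$, we obtain $A(t)=\tilde A(t,\Theta^*(t))\ge\gamma\,\id$ with $\gamma=2c/(1+e)^3>0$; in particular $\int_0^{2\pi}\langle A(t)x,x\rangle\,\d t\ge 2\pi\gamma\,\|x\|^2>0$ for $x\ne0$, which is the first requirement of the stability test. I expect the only genuinely delicate points to be (i) checking that the constant $M$ of \cref{cond_lin_est2} really dominates $\|\mathcal C^{-1}F(t,\Theta^*(t))\|$, i.e.\ producing the clean bound on $|\ddot f|$, and (ii) the small algebraic trick $(a/r)^5(1-e)^2\le(a/r)^3$ that allows condition \cref{cond_lin_est3} --- which carries no powers of $(1-e)$ --- to suffice; the remaining estimates are routine.
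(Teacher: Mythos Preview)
Your proof is correct and follows essentially the same route as the paper. Both arguments first use \cref{cond_lin_est2} together with the Keplerian bound $|\ddot f|\le 2e\sqrt{1-e^2}/(1-e)^4$ to verify $\|\mathcal C^{-1}F(t,\Theta^*(t))\|\le M<1/(4\pi)$, invoke \Cref{lema_bound_sol} to get $\cos\Theta^*_j(t)\ge\cos(2\pi^2 M)>0$, and then compare the positive $1/r^3$ diagonal part against the sign--indefinite $1/r^5$ part; your ``algebraic trick'' $(a/r)^5(1-e)^2\le(a/r)^3$ is exactly the paper's device of factoring $\tilde A=(a/r)^3(D-B)$ with $B$ carrying the residual $(a/r)^2\le 1/(1-e)^2$. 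The only cosmetic difference is that the paper bounds the $1/r^5$ block via $\rho(B)\le\|B\|_\infty$ while you expand the quadratic form directly using $2|x_1x_2|\le x_1^2+x_2^2$---both yield the same constant $\max_j\alpha_j\Lambda_j/\mathcal C_j$ and hence the same $\gamma$ (up to the harmless factor $(1+e)^{-3}$ that you, more carefully than the paper, keep track of).
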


\begin{proof}
	The proof this lemma is based on the following fact. Considering the partial ordering of symmetric matrices given by \Cref{def:sym}, the conditions \cref{cond_lin_est2} and \cref{cond_lin_est3} imply that the term proportional to $1/r^3$ in \cref{A_t_Th} dominates the other term, that is proportional to $1/r^5$. Let us prove it. We can compute the derivatives of $f(t)$ using \Cref{t,r,f} and get
	\begin{equation*}
	\ddot f(t)=-\frac{2e\sqrt{1-e^2} \sin (u(t))}{(1-e\cos (u(t)))^4},
	\end{equation*}
	
	\noindent where $u$ is the eccentric anomaly. Using the maximum norm we see from \cref{cond_lin_est2} and \cref{F_Theta} that $1/(4\pi)>M\ge ||\mathcal C^{-1} F(t,\Theta^*(t))||$. Furthermore, from \Cref{lema_bound_sol} we know that $|\Theta^*_j(t)|\le 2 \pi^2 M$, then, we can see graphically that
	\begin{equation*}
	\cos \Theta_1^*(t) \ge \cos (2\pi^2 M ) > 0,
	\end{equation*}
	
	\noindent therefore,
	\begin{equation}\label{order_r3}
		\matdos{\frac{\Lambda_1}{\mathcal C_1} \cos \Theta_1^*(t)}{0}{0}{\frac{\Lambda_2}{\mathcal C_2} \cos \Theta_2^*(t)} \ge \cos (2\pi^2 M )  \min \left \{\frac{\Lambda_1}{\mathcal C_1},\frac{\Lambda_2}{\mathcal C_2} \right \}  \mathds 1.
	\end{equation}
	
	On the other hand, let us define
	\begin{equation*}
	B=-\parentesis{\frac{a}{ r(t)}}^{2}
	\sum_{(m_1,m_2)\in \Xi } \matdos{\frac{m_1^2}{\mathcal C_1}}{\frac{m_1m_2}{\sqrt{\mathcal C_1\mathcal C_2}}}{\frac{m_1m_2}{\sqrt{\mathcal C_1\mathcal C_2}}}{\frac{m_2^2}{\mathcal C_2}} \,\Lambda_{m_2}^{m_1}	 
	\cos(m_1\Theta_1^*(t)+m_2 \Theta_2^*(t)).
	\end{equation*}
	
	\noindent As we did in the Proof of \Cref{propo_unicidad}, we can take the maximum norm and obtain that 
	\begin{equation*}
	\max\left \{\alpha_1\frac{\Lambda_1}{\mathcal C_1},\alpha_2 \frac{\Lambda_2}{\mathcal C_2} \right \} \ge ||B||\ge\rho(B)
	\end{equation*}
	
	\noindent where $\rho(B)$ is the spectral radius of $B$, then,
	\begin{equation*}
	 \max \left \{\alpha_1\frac{\Lambda_1}{\mathcal C_1},\alpha_2 \frac{\Lambda_2}{\mathcal C_2} \right \} \mathds 1 \ge B.
	\end{equation*}

	\noindent From this inequality, \cref{order_r3} and the definition \cref{A_t_Th} of  $\tilde A(t,\Theta)$, we prove that $\tilde A(t,\Theta^*(t))\ge \gamma \mathds 1$  with
	\begin{equation*}
	\gamma=\cos (2\pi^2 M )  \min \left \{\frac{\Lambda_1}{\mathcal C_1},\frac{\Lambda_2}{\mathcal C_2} \right \} - \max \left \{\alpha_1\frac{\Lambda_1}{\mathcal C_1},\alpha_2 \frac{\Lambda_2}{\mathcal C_2} \right \}>0.
	\end{equation*}
\end{proof}

Now we see that \Cref{lema_def_pos} implies the first condition of \cref{cond_lin_est} because $\langle A(t)x,x \rangle\ge \gamma ||x||^2>0 $.

\section{The synchronous resonance in the dissipative regime}\label{sec:asymp}

Recall from \cref{spin_spin_dis_mayus} that the \textit{dissipative} spin-spin model takes the form of the system
\begin{equation}\label{spin_spin_dis_mayus_bis}
 \ddot \Theta +  \diag (\delta) D(t) \dot \Theta + \mathcal C^{-1}F(t,\Theta) = 0   ,\qquad \delta=\vecdos{\delta_1}{\delta_2},\enspace \delta_j\ge 0,
\end{equation}

\noindent with $D(t)=(a/r(t))^6$. We know from \Cref{th_estabilidad} that, for $\delta=0$, there exists an odd $2\pi$-periodic solution $\Theta^*(t)$, that is strongly linearly stable in the set of the parameters space satisfying the conditions given in \Cref{cond_lin_est1,cond_lin_est2,cond_lin_est3}.

The main result of this section is \Cref{teo_asint}. There we will see that the \textit{conservative} periodic solution $\Theta^*(t)$ can be continued in the presence of friction to an asymptotically stable periodic solution $\Psi^*(t,\delta)$. However, the odd symmetry of the solution is lost because \cref{spin_spin_dis_mayus_bis} is not invariant under the change $(t,\Theta)\rightarrow (-t,-\Theta)$ as in the \textit{conservative} case. The proof of \Cref{teo_asint} is mainly based on Theorem 2 in \cite{misort2020} and on classical results on continuation of periodic solutions summarized in the next proposition.

\begin{proposition}\label{propo:cod}
	Let $\mathcal F$ be a real analytic function $\mathcal F= \mathcal F(t,x,\zeta)$, such that $\mathcal F(t+T,x,\zeta)= \mathcal F(t,x,\zeta)$, with $t\in\mathds R$, $ x\in \mathds R^n$, $\zeta\in \mathds R^d$. Assume that the equation $\dot x = \mathcal F(t,x,0) $ has a $T$-periodic solution $x=p(t)$. \begin{enumerate}
		\item Suppose that $1$ is not a Floquet multiplier of the corresponding variational equation at $x=p(t)$,
		\begin{equation*}
		\dot y = \partial_x\mathcal F(t,p(t),0) y.
		\end{equation*}
		
		\noindent Then, for $\zeta\ne0$, with small enough norm $||\zeta||$, the equation $\dot x = \mathcal F(t,x,\zeta) $ has a $T$-periodic solution $x=p_c(t, \zeta)$ such that $p_c(t, 0)=p(t)$. Moreover, $p_c(t, \zeta)$ is an analytic function and it is unique of each $\zeta$.
		\item If additionally, $p(t)$ is asymptotically stable, then this is also true for $p_c(t, \zeta)$.
	\end{enumerate}
\end{proposition}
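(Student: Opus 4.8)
The plan is to reduce the whole statement to the stroboscopic (time-$T$) map together with the analytic implicit function theorem and elementary Floquet theory; this is the classical continuation argument, so I will only indicate the pieces. Let $\phi(t;x,\zeta)$ denote the solution of $\dot x = \mathcal F(t,x,\zeta)$ with $\phi(0;x,\zeta)=x$. Since $\mathcal F$ is real analytic, the standard theorems on analytic dependence on initial conditions and parameters show that $\phi$ is defined and real analytic jointly in $(t,x,\zeta)$ on a neighbourhood of $[0,T]\times\{p(0)\}\times\{0\}$. Because $\mathcal F(t+T,\cdot,\cdot)=\mathcal F(t,\cdot,\cdot)$, a solution is $T$-periodic if and only if it returns to its initial value after time $T$ (if $\phi(T;x_0,\zeta)=x_0$ then $t\mapsto\phi(t+T;x_0,\zeta)$ and $t\mapsto\phi(t;x_0,\zeta)$ solve the same initial value problem and so coincide). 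Hence $T$-periodic solutions near $p$ are exactly the zeros near $p(0)$ of the displacement map
\[
G(x,\zeta):=\phi(T;x,\zeta)-x .
\]

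First I would prove part 1. We have $G(p(0),0)=0$ and $\partial_x G(p(0),0)=M-\id$, where $M=\partial_x\phi(T;p(0),0)$ is the monodromy matrix of the variational equation $\dot y=\partial_x\mathcal F(t,p(t),0)\,y$, whose eigenvalues are the Floquet multipliers. The hypothesis that $1$ is not a multiplier means precisely that $M-\id$ is invertible, so the analytic implicit function theorem yields a neighbourhood of $\zeta=0$ and a unique real analytic map $\zeta\mapsto x_0(\zeta)$ with $x_0(0)=p(0)$ and $G(x_0(\zeta),\zeta)=0$. Setting $p_c(t,\zeta):=\phi(t;x_0(\zeta),\zeta)$ gives a $T$-periodic solution, real analytic in $(t,\zeta)$, with $p_c(t,0)=p(t)$; its uniqueness for each small $\zeta$ among periodic solutions close to $p$ is exactly the uniqueness clause of the implicit function theorem.

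For part 2 I would argue by continuity of the spectrum. The variational equation of $\dot x=\mathcal F(t,x,\zeta)$ along $p_c(\cdot,\zeta)$ has monodromy matrix $M(\zeta)=\partial_x\phi(T;x_0(\zeta),\zeta)$, which is analytic, hence continuous, in $\zeta$, with $M(0)=M$. Asymptotic stability of $p(t)$ forces all Floquet multipliers of the variational equation at $p$ into the closed unit disk, and in the dissipative setting relevant here it places them strictly inside it, $\sigma(M)\subset\{\,|\mu|<1\,\}$; since the eigenvalues of a matrix depend continuously on its entries, $\sigma(M(\zeta))\subset\{\,|\mu|<1\,\}$ for $\|\zeta\|$ small, and the principle of linearized stability for periodic solutions then gives that $p_c(t,\zeta)$ is asymptotically stable.

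I expect the only genuinely delicate points to be bookkeeping ones: verifying the joint real analyticity of the flow in $(t,x,\zeta)$ on the relevant set (so that $p_c$ comes out analytic and the analytic implicit function theorem applies verbatim), and, in part 2, making precise the passage from ``$p(t)$ asymptotically stable'' to ``all Floquet multipliers strictly inside the unit circle'' — which is clean precisely because the dissipative perturbation in \Cref{teo_asint} makes the monodromy matrix hyperbolic and attracting. Everything else is a routine application of the analytic implicit function theorem and Floquet theory.
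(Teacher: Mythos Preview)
The paper does not give its own proof of this proposition; it simply refers the reader to Theorems~1.1 and~1.2 in Chapter~14 of \cite{cod} (Coddington--Levinson). Your argument via the time-$T$ displacement map $G(x,\zeta)=\phi(T;x,\zeta)-x$, the analytic implicit function theorem, and continuity of the Floquet multipliers is exactly the classical proof one finds there, so your proposal is correct and coincides with the approach the paper invokes by reference. Your caveat about part~2 (that ``asymptotically stable'' must be read as ``all multipliers strictly inside the unit circle'' for the spectral-continuity argument to go through) is well taken and is indeed the only point requiring care; in the paper's application this is satisfied because the unperturbed solution comes from \cite{misort2020}, where the multipliers are shown to lie strictly inside the unit disk.
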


For the detailed proof of this proposition, see Theorems 1.1 and 1.2 in Chapter 14, \cite{cod}. Now we can state the main theorem.

\begin{theorem}\label{teo_asint}
	Assume that the parameters of the system satisfy the conditions in \Cref{th_estabilidad}. If $|\delta_j|$ are small enough, then there exists a function $\Psi^*(t,\delta)$, analytic in both entries, satisfying
	\begin{enumerate}
		\item[i)] $\Psi^*(t,0)= \Theta^* (t)$ for each $t\in \mathds R$.
		\item[ii)] $\Psi^*(t,\delta)$ is a $2\pi$-periodic solution of \cref{spin_spin_dis_mayus_bis}. Moreover, if $|\Lambda_{m_2}^{m_1}|$ are small enough, then, $\Psi^*(t,\delta)$ is asymptotically stable.
	\end{enumerate}
\end{theorem}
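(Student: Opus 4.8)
The plan is to build $\Psi^*$ by analytic continuation from $\delta=0$ using \Cref{propo:cod}, and then to extract asymptotic stability from part (2) of that proposition after a \emph{second} continuation that switches the coupling on, the scalar case being handled by Theorem 2 of \cite{misort2020}.

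First I would recast \cref{spin_spin_dis_mayus_bis} as a first-order system $\dot x=\mathcal F(t,x,\delta)$ for $x=(\Theta,\dot\Theta)\in\mathds R^4$, with $\mathcal F$ real analytic and $2\pi$-periodic in $t$ (recall that $r(t)$, $f(t)$, hence $D(t)=(a/r(t))^6$, are analytic and $2\pi$-periodic, and that $F(t,\Theta)$ is analytic). At $\delta=0$ this system has the $2\pi$-periodic solution $p(t)=(\Theta^*(t),\dot\Theta^*(t))$ from \Cref{th_estabilidad}, whose variational equation is, after the constant change $y=\mathcal C^{1/2}\eta$ (which leaves the Floquet multipliers unchanged), exactly the LPH system \cref{variacional_Theta}, $\ddot y+A(t)y=0$. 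By \Cref{th_estabilidad} this system is strongly stable, so by \Cref{propo:1} neither $1$ nor $-1$ is a Floquet multiplier; in particular $1$ is not. Hence \Cref{propo:cod}(1), applied with $\zeta=\delta$, yields a unique analytic $2\pi$-periodic solution $\Psi^*(t,\delta)$ of \cref{spin_spin_dis_mayus_bis} for $\|\delta\|$ small, with $\Psi^*(t,0)=\Theta^*(t)$. This already gives i) and the periodicity assertion in ii).

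For the asymptotic stability I would look at the variational equation of \cref{spin_spin_dis_mayus_bis} along $\Psi^*(t,\delta)$, which after the same $\mathcal C^{1/2}$-scaling reads $\ddot y+\diag(\delta)D(t)\dot y+A_\delta(t)y=0$ with $A_\delta(t)=\mathcal C^{-1/2}\partial_\Theta F(t,\Psi^*(t,\delta))\mathcal C^{-1/2}$ symmetric (the diagonal damping matrix is unaffected by the scaling since diagonal matrices commute). I would introduce a further parameter $\varepsilon$ multiplying the coupling constants $\Lambda_{m_2}^{m_1}$ with $m_1m_2\neq0$. At $\varepsilon=0$ the system \cref{spin_spin_dis_mayus_bis}, together with its continued solution and its variational equation, splits into two independent scalar dissipative spin-orbit problems; under the conditions of \Cref{th_estabilidad} (which only weaken when $\varepsilon=0$) each is precisely the setting of Theorem 2 of \cite{misort2020} — a strongly linearly stable odd periodic solution of the conservative scalar equation that, with small friction $\delta_j>0$, continues to an asymptotically stable periodic solution. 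So for $\varepsilon=0$ the $4\times4$ monodromy matrix is block-diagonal with both blocks having spectrum inside the open unit disk. Since the monodromy depends analytically on $\varepsilon$ and $\delta$, its eigenvalues stay inside the unit disk for $|\varepsilon|$, i.e. $|\Lambda_{m_2}^{m_1}|$, small; together with \Cref{propo:cod}(2) this gives the asymptotic stability of $\Psi^*(t,\delta)$.

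The main obstacle is precisely this asymptotic-stability step. One cannot apply \Cref{propo:cod}(2) with base point $\delta=0$, because $\Theta^*$ solves a Hamiltonian system and hence is never asymptotically stable; the mechanism by which small dissipation drags the unit-modulus multipliers of the conservative variational equation into the open unit disk must be supplied by hand. That is the role of Theorem 2 of \cite{misort2020}, and it is why the coupling has to be turned off first, so that this scalar result applies verbatim; the positivity $A(t)\ge\gamma\id$ from \Cref{lema_def_pos} is what guarantees that the conservative variational equations are of the favorable (positive Krein) type, so that the damping is genuinely contracting. The remaining ingredients — the first-order reformulation, the analyticity and periodicity of $\mathcal F$, and the continuity of the Floquet multipliers under the $\varepsilon$-deformation — are routine.
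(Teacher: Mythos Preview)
Your proof follows the paper's argument essentially step for step: continuation in $\delta$ via \Cref{propo:cod}(1) (using strong stability and \Cref{propo:1} to exclude $1$ as a multiplier), then a second continuation in the coupling parameters after invoking Theorem~2 of \cite{misort2020} on the decoupled scalar problems, with asymptotic stability carried over by \Cref{propo:cod}(2).

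One small imprecision: you switch off only the constants $\Lambda_{m_2}^{m_1}$ with $m_1m_2\neq0$, but this does \emph{not} give ``precisely the setting of Theorem~2 of \cite{misort2020}''. The decoupled scalar equations then still carry the diagonal $(a/r)^5$ terms coming from $(m_1,m_2)\in\{(\pm1,0),(\pm2,0),(0,\pm1),(0,\pm2)\}$, and these are not part of the standard spin-orbit equation treated in \cite{misort2020}. The paper avoids this by setting \emph{all} $\Lambda_{m_2}^{m_1}$ to zero (which is also what the hypothesis ``$|\Lambda_{m_2}^{m_1}|$ small enough'' in the statement suggests), landing exactly on \cref{spin-orbit_j_dis}; you should do the same, and then your argument coincides with the paper's.
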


\begin{proof}
	Recall that the \textit{conservative} periodic solution $\Theta^* (t)$ is strongly linearly stable. \Cref{propo:1} guarantees that $1$ is not a Floquet multiplier of the variational equation at $\Theta^* (t)$. Then, we can apply the first item of \Cref{propo:cod} to make the analytic continuation of the periodic solution from the \textit{conservative} ($\delta_j=0$) to the \textit{dissipative} regime ($\delta_j> 0$). We conclude that there exists a unique analytic $2\pi$-periodic solution $\Psi^*(t,\delta)$ of \cref{spin_spin_dis_mayus_bis} such that $\Psi^*(t,0)= \Theta^* (t)$ for small enough $\delta_j$.
	
	Let us explain more in detail the proof that the continuation is asymptotically stable. If $\Lambda_{m_2}^{m_1}=0$ for all $({m_1},{m_2})\in \Xi$, then \cref{spin_spin_dis_mayus_bis} takes the form of two uncoupled \textit{dissipative} spin-orbit equations
	
	\begin{equation}\label{spin-orbit_j_dis}
	\ddot \Theta_j + \delta_j \parentesis{\frac{a}{ r(t)}}^6 \dot \Theta_j + \frac{\Lambda_j}{\mathcal C_j} \parentesis{\frac{a}{ r(t)}}^3 \sin \Theta_j =0.
	\end{equation}
	
	Besides, conditions in \Cref{cond_lin_est1,cond_lin_est2,cond_lin_est3} guarantee that, for $\Lambda_{m_2}^{m_1}=0$, the \textit{conservative} solution $\Theta^*(t)$ is strongly linearly stable. We can see the solution $\Theta^*(t)$ split in two components $\Theta_j^*(t)$, each of them is a solution of the \textit{conservative} spin-orbit problem \cref{spin-orbit_j_dis} with $\delta_j=0$. Now we can apply Theorem 2 in \cite{misort2020} that guarantees that each equation in \cref{spin-orbit_j_dis} has an asymptotically stable $2\pi$-periodic solution $\Theta_{j,\delta_j}^*(t)$ provided that $\delta_j\in (0,\bar \delta_j]$. Here $\bar \delta_j$ are small numbers quantified in \cite{misort2020}. Moreover, $\Theta_{j,\delta_j}^*(t)$ is the unique continuation of $\Theta_j^*(t)=\Theta_{j,0}^*(t)$.
	
	Let us consider \cref{spin-orbit_j_dis} as a system of two equations. This system has an asymptotically stable $2\pi$-periodic solution  $\Psi^*(t,\delta)=(\Theta_{1,\delta_1}^*(t),\Theta_{2,\delta_2}^*(t))^\mathsf{T}$ such that $\Psi^*(t,0)= \Theta^* (t)$. If $|\Lambda_{m_2}^{m_1}|$ are small, we can see \cref{spin_spin_dis_mayus_bis} as a perturbation of the system \cref{spin-orbit_j_dis} and apply the second item of \Cref{propo:cod}. In this way we guarantee that $\Psi^*(t,\delta)$ has a $2\pi$-periodic continuation for $\Lambda_{m_2}^{m_1}\ne0$ that is asymptotically stable if $|\Lambda_{m_2}^{m_1}|$ are small enough.
\end{proof}

Note that for asymptotic stability we require not only that $|\delta_j|$ should be small, but also $|\Lambda_{m_2}^{m_1}|$. We would like to erase this condition on the coupling parameters $\Lambda_{m_2}^{m_1}$. However, from a theoretical point of view, this is certainly difficult to address in general since we deal with systems of differential equations. Let us explain this point. The variational equation of \cref{spin_spin_dis_mayus_bis} near $\Psi^*(t,\delta)$ is
\begin{equation}\label{lin_dis}
\ddot \eta +  \diag (\delta) D(t) \dot \eta + \mathcal C^{-1}\partial_\Theta F(t,\Psi^*(t,\delta))\eta = 0   , \quad \delta=\vecdos{\delta_1}{\delta_2}.
\end{equation}

For $e\ne0$, \cref{lin_dis} is a linear $2\pi$-periodic system of two equations of second order. In \cite{misort2020}, asymptotic stability was proved for the spin-orbit problem taking advantage of the following fact. Any second order periodic equation $\ddot x + a_1(t)\dot x + a_0(t) x =0$, $x\in \mathds R$, $a_n(t)=a_n(t+T)$, can be converted into a Hill's equation $\ddot \chi + \alpha(t)\chi =0$, $\alpha(t)=a_0(t)-\frac{1}{4}a_1(t)^2- \frac{1}{2}\dot a_1(t)$, by the change of variables $\chi(t)=x(t)\exp (\frac{1}{2}\int_0^ta_1(s) \d s)$. See \cite{mag}. We can see the \textit{dissipative} problem ($a_1(t)\ne0$) as a perturbation of the \textit{conservative} one ($a_1(t)=0$). Assume that $\ddot x + a_0(t) x =0$ is strongly stable, then $\ddot \chi + \alpha(t)\chi =0$ is stable. Since it is a Hill's equation (also a LPH system), the modulus of the Floquet multipliers of $\ddot \chi + \alpha(t)\chi =0$ is 1. Now we undo the change of variables and conclude that the modulus of the Floquet multipliers of $\ddot x + a_1(t)\dot x + a_0(t) x =0$ is smaller than 1, therefore, it is asymptotically stable. However, it is not clear how to perform an analogous procedure in \cref{lin_dis}. The main obstacle is the non-commutativity of matrices due to the asymmetric nature of the \textit{dissipative} problem ($\delta_1\ne\delta_2$). Actually, if we follow the same steps, we end up with a system of equations that is no longer periodic for $\delta_1\ne\delta_2$. The numbers $\delta_j$ depend on several parameters of the bodies and we do not see any good physical reason to impose both dissipative parameters to be equal. In fact, if $\delta_1\ne\delta_2$, in principle the \textit{dissipative} spin-spin model cannot be considered conformally symplectic as the spin-orbit problem. See \cite{calceldel}. From this discussion, we conclude that this it is necessary a deeper theoretical study, but it is beyond the scope of this paper.

On the other hand, let us see that for $e=0$, the solution of \cref{lin_dis} is asymptotically stable. The solution given by \Cref{teo_asint} is $\Psi^*(t,\delta)\equiv 0$. Taking $y=\mathcal C^{1/2} \eta$, the corresponding variational equation is
\begin{equation}\label{lin_dis_cons}
\ddot y + \diag (\delta) \dot y + A y =0,
\end{equation}

\noindent where $A$ is the symmetric constant matrix given by
\begin{equation*}
A=
\matdos{\xi_1}{\sigma}{\sigma}{\xi_2}
=\matdos{\frac{\Lambda_1}{\mathcal C_1} }{0}{0}{\frac{\Lambda_2}{\mathcal C_2} }
+
\sum_{(m_1,m_2)\in \Xi } \matdos{\frac{m_1^2}{\mathcal C_1}}{\frac{m_1m_2}{\sqrt{\mathcal C_1\mathcal C_2}}}{\frac{m_1m_2}{\sqrt{\mathcal C_1\mathcal C_2}}}{\frac{m_2^2}{\mathcal C_2}} \,\Lambda_{m_2}^{m_1}	 
.
\end{equation*}

\noindent Note that, by conditions \cref{cond_lin_est2} and \cref{cond_lin_est3}, $A$ is a positive definite matrix. See \Cref{lema_def_pos}. The characteristic polynomial of equation \cref{lin_dis_cons} is
\begin{equation*}
p(\omega)=\omega^4 + (\delta_1+\delta_2 )\omega^3+(\xi_1+\xi_2+\delta_1\delta_2 )\omega^2+ (\xi_1\delta_2+\xi_2\delta_2)
\omega + \det A.
\end{equation*}

Equation \cref{lin_dis_cons} is asymptotically stable if and only if all the roots of $p(\omega)$ have negative real parts. This can be checked with the Routh-Hurwitz criterion, see \cite{gan1959}. According to it, all the roots of the polynomial have negative real parts if and only if the associated Hurwitz determinants of the polynomial are strictly positive, say,
\begin{equation*}
D_1=\delta_1+\delta_2 ,\quad D_2=\delta_1^2\delta_2+ \delta_2^2\delta_1+ \xi_1\delta_1 + \xi_2 \delta_2,
\end{equation*}
\begin{equation*}
D_3=D_1^2\sigma^2+\delta_1\delta_2(D_1(\xi_1\delta_2+\xi_2\delta_2)+(\xi_1-\xi_2)^2),\quad D_4=D_3 \det A.
\end{equation*}

\noindent Since $A$ is positive definite, we get asymptotic stability for all $\delta_1$ and $\delta_2$ such that both are non-negative and at least one is different from zero.

\section{Applications}\label{sec:app}

Recall from the end of \Cref{sec:model} that our model depends on six independent physical parameters $(e;\mathcal C_1, \lambda_1, \lambda_2, \hat d_1,\hat q_1)$, where $e$ is the orbital eccentricity, $\mathcal C_j$ the moment of inertia of $\mathcal E_j$ with respect to the $\mathsf{c}_j$-axis, $\lambda_j=\Lambda_j/\mathcal C_j$ is the oblateness of $\mathcal E_j$ in the plane of motion,  and $\hat d_j$ and $\hat q_j$ are, respectively, the oblateness and the flatness of $\mathcal E_j$ with respect to the size of the orbit.

We have two type of estimates. The first type in \cref{cond_un} guarantees uniqueness of the synchronous resonance in the \textit{conservative} regime. The second one in \Cref{cond_lin_est1,cond_lin_est2,cond_lin_est3} guarantees linear stability of the same solution. Our estimates depend on certain values $\alpha_j$ in \cref{alpha}. To write them in terms of the physical parameters, we use the definitions in \Cref{Lambda_j,Lambda_1_0,Lambda_2_0,Lambda_1_1}, then
\begin{align}
\label{sum1}
\sum_{(m_1,m_2)\in \Xi } \frac{m_j^2}{\mathcal C_j}	
\Lambda_{m_2}^{m_1} = & 
\lambda_j
\parentesis{
	\frac{25}{4}\hat d_j + \frac{25}{28}\hat q_j + \frac{19}{4}\hat d_{3-j} + \frac{5}{4}\hat q_{3-j}
},\\
\label{sum2}
\sum_{(m_1,m_2)\in \Xi } \frac{|m_1m_2|}{\sqrt{\mathcal C_1\mathcal C_2}}	
\Lambda_{m_2}^{m_1} =& \frac{19}{4} \sqrt{\frac{\mathcal C_1}{\mathcal C_2}}
\lambda_1 \hat d_2 = \frac{19}{4} \sqrt{\frac{\mathcal C_2}{\mathcal C_1}}
\lambda_2 \hat d_1 ,
\\
\label{sum3}
\sum_{(m_1,m_2)\in \Xi } \frac{|m_j|}{\mathcal C_j}	
\Lambda_{m_2}^{m_1} = &
\lambda_j
\parentesis{
	\frac{25}{8}\hat d_j + \frac{25}{28}\hat q_j + \frac{19}{4}\hat d_{3-j} + \frac{5}{4}\hat q_{3-j}
}.
\end{align}

\noindent Now we are ready to apply our estimates to specific cases.

\subsection{Real systems}

In one hand, the Pluto-Charon binary is the largest known system that is in double synchronous resonance. The physical parameters of the system relevant for the spin-spin model are shown in \Cref{tab:real}. Pluto is almost twice the size of Charon, contains the 89\% of the mass and the 97\% of the body moment of inertia ($\mathcal C_j$) of the system. Besides, the size of the orbit is quite large ($a=27.2$) compared to the sizes of the bodies. This results in very small values of $\hat d_j$ of order $10^{-7}$, which means this is a certainly weak spin-spin coupling. The orbit has a very small eccentricity $e=0.0002$. Recall that the double synchronous resonance of the circular case ($e=0$) is the trivial solution $\Theta(t)\equiv0$, both for the \textit{conservative} case \cref{spin_spin_mayus} and the \textit{dissipative} case \cref{spin_spin_dis_mayus}. The asymptotic stability of the solution for any value of the dissipative parameters is easily guaranteed, as it was shown at the end of \Cref{sec:asymp} using equation \cref{lin_dis_cons}. For the real eccentricity, the solution $\Theta^*(t)$ of \cref{spin_spin_mayus} oscillates very close to zero and our estimates guarantee the uniqueness and linear stability of  solution. Furthermore, \Cref{teo_asint} shows the existence of an asymptotically stable solution $\Psi^*(t,\delta)$ of the \textit{dissipative} model provided that $\delta_j$, $\hat d_j$ and $\hat q_j$ are small enough. Unfortunately, this last result is not quantified in this paper for the real parameters.

\begin{table}
	\begin{center}
		\begin{tabular}{|c||cccccc|cc|}
			\hline
			System   &  $M_j$ & $\mathsf{a}_j$ &$\mathcal C_j$    &  $ \lambda_j$ & $\hat d_j$         & $\hat q_j$ & $a$ & $e$ \\ \hline  \hline \hline 
			Pluto &  0.89    & 1.65   & $0.97$ & $ 3.3\cdot 10^{-5} $  & $1.5\cdot 10^{-7}$ & 	$1.2\cdot 10^{-6}$ & \multirow{2}{*}{$27.2$} & \multirow{2}{*}{$2.0 \cdot 10^{-4}$} \\[0.5ex]  
			Charon  & 0.11 & 0.84    & $0.03$ &  $2.4\cdot 10^{-3}$     & $3.5\cdot 10^{-7}$ & $8.2\cdot 10^{-7}$  & & \\	\hline   \hline    
			Patroclus &  0.56    & 1.7   & $0.60$ & $ 0.11 $  & $2.6\cdot 10^{-4}$ & 	$1.2\cdot 10^{-3}$ & \multirow{2}{*}{$18.2\pm 0.5$} & \multirow{2}{*}{$0.02\pm0.02$} \\[0.5ex]  
			Menoetius  & 0.44 & 1.6    & $0.40$ &  $0.14$     & $2.2\cdot 10^{-4}$ & $9.9\cdot 10^{-4}$  & &  \\	\hline                         
		\end{tabular}
		\caption{Real physical parameters for two binary systems. For Pluto and Charon, we take the largest values of $\lambda_j$, $\hat d_j$ and $\hat q_j$ obtained from data in \cite{kho2020}. The parameters of Patroclus and Menoetius are obtained from data in \cite{davsch2020} and the orbital parameters from \cite{mar2006}.}
		\label{tab:real}
	\end{center}
\end{table}

On the other hand, the Trojan binary asteroid 617 Patroclus is a system whose components are of similar size, mass and moment of inertia. See the physical parameters of its components, Patroclus and Menoetius, in \Cref{tab:real}. Each body has a diameter of around one hundred kilometres, almost ten times smaller than Charon. Patroclus and Menoetius have a more oblate ellipsoidal shape than Pluto and Charon and the size of the orbit in this case ($a=18.2\pm0.5$) is smaller. In consequence, the corresponding dynamical parameters $\lambda_j$, $\hat d_j$ and $\hat q_j$ are several orders of magnitude larger. The orbital eccentricity is not measured with enough precision, $e=0.02\pm 0.02$. With our estimates, we are able to guarantee the uniqueness of the solution $\Theta^*(t)$ of \cref{spin_spin_mayus} for eccentricities up to $e=0.04$. However, we fail to guarantee linear stability even for $e=0$. The main reason is that the stability test given by the conditions \cref{cond_lin_est} is not fine enough for such large values of $\lambda_j$. In the following subsection we will explain what is the range of parameters that is covered by our study.

\subsection{Stability diagrams in the space of parameters}

Note that all the terms appearing in \Cref{sum1,sum2,sum3} are positive. Since $\hat q\ge \hat d$, and, in order to reduce the parameters in the upper bounds for the expressions in \Cref{sum1,sum2,sum3}, we can take $\hat d_j=\hat q_j$. In this way, we reduce the independent parameters to five $(e;\lambda_1,\lambda_2,\mathcal C_1, \hat q_1)$. Note now that, to take $\hat q_1=0$ is equivalent to break the coupling of the system, resulting in two independent spin-orbit problems.

We will consider two special cases with three free parameters. In one hand, the case of identical bodies, that we compare with the asteroid 617 Patroclus. Here the parameters are $e$, $\lambda_j=\lambda$ and $\hat q_j=\hat q$. On the other hand, the case when $\mathcal E_1$ is twice the size of $\mathcal E_2$, that we compare with the Pluto-Charon system. Here we consider the same density and the free parameters are $e$, $\lambda_2$ and $ \hat q_1$, whereas the dependent parameters are $\lambda_1=2^{-3}\lambda_2$ and $\hat q_2=2^{-5}\hat q_1$.

\Cref{fig:both} shows regions in the space of parameters for which there is uniqueness and linear stability of the double synchronous resonance according to our theoretical estimates. We see that we cover the Patroclus-Menoetius system (top panels) only for the uniqueness of the solution but not for the linear stability. In contrast, the Pluto-Charon system (bottom panels) is covered for linear stability as well. We can compare the diagrams of $\hat q=0$ and $\hat q_1=0$ with the theoretical estimates obtained in \cite{misort2020}, shown in \Cref{fig:stab_spin_orbit}. We see that, although the uniqueness region is similar, the stability region (in yellow) is considerably larger in \Cref{fig:stab_spin_orbit} than those in \Cref{fig:both}. This shows that the mathematical techniques used in \cite{misort2020} are much finer than in this paper. In \cite{misort2020} we used generalized Lyapunov criteria using $L^p$-norms, with $p\in[1,\infty]$, see \cite{zha}, and upper and lower solutions to bound the amplitude of the solution. Instead, in this paper we use the stability test given by \cref{cond_lin_est}, that is of type $L^\infty$, and a rougher bound for the amplitude of the solution in \Cref{lema_bound_sol}. Since the model is quite new, here we initiate the analysis with a simpler approach. Besides, the mathematical tools are not as well developed for systems of equations as for standard second order scalar equations.

\begin{figure}
	\centering
	\scalebox{0.9}{\includegraphics{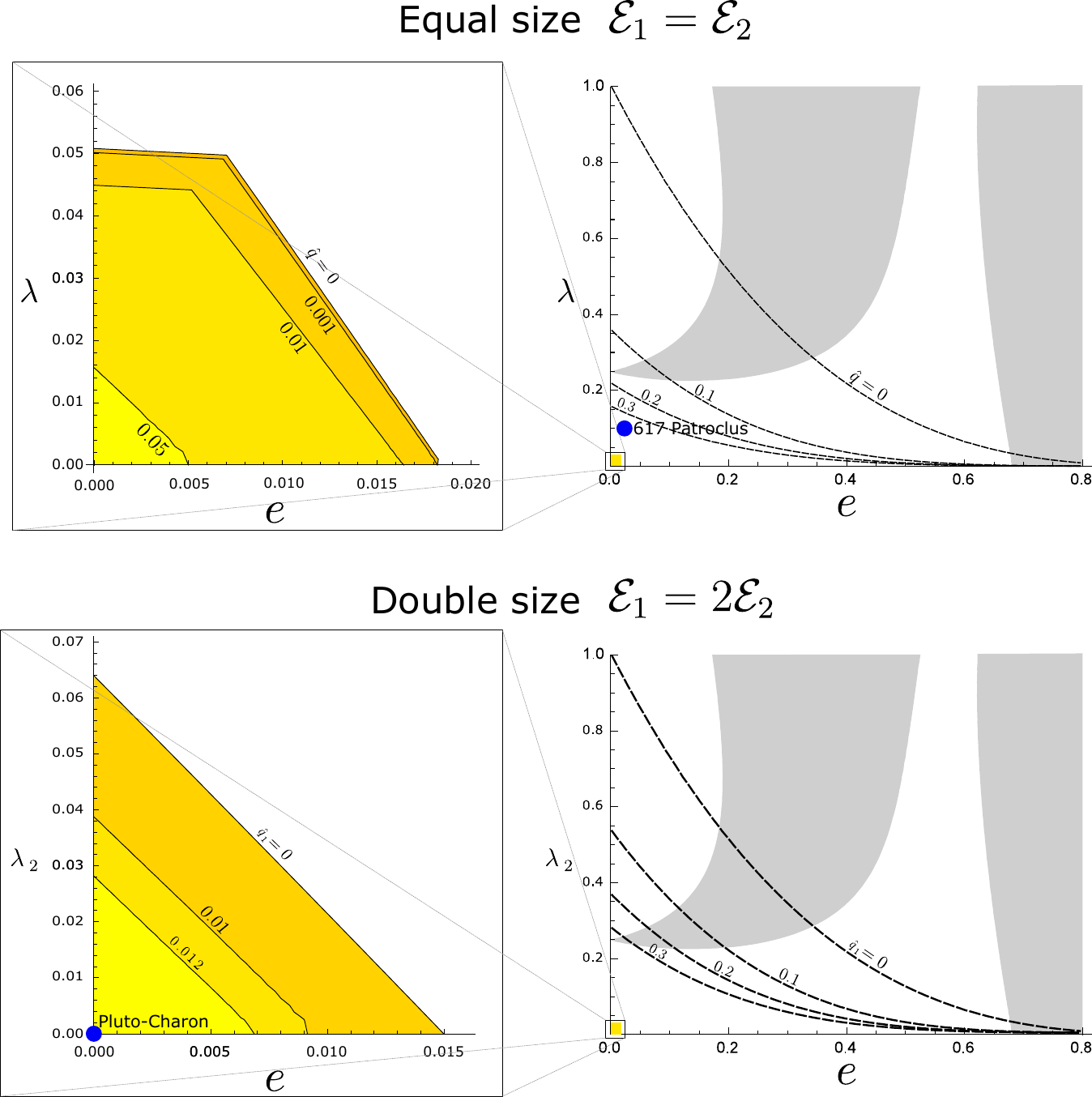}}
	\caption{Stability diagrams in the $(e,\lambda)$-plane of the synchronous resonance of the spin-spin model. Top: both bodies are equal. Bottom: one body is double the size of the other. The double synchronous resonance is unique under the dashed lines (right) and linearly stable under the black lines (left) for the indicated value of $\hat q$. In the left we see zoomed views of the stable regions. The more yellow is the region indicates that stability is guaranteed for larger values of $\hat q$. The gray regions in the right are unstable for the uncoupled system (spin-orbit), i.e., with $\hat q=0$.}
	\label{fig:both}
\end{figure}

\begin{figure}
	\centering
	\scalebox{0.9}{\includegraphics{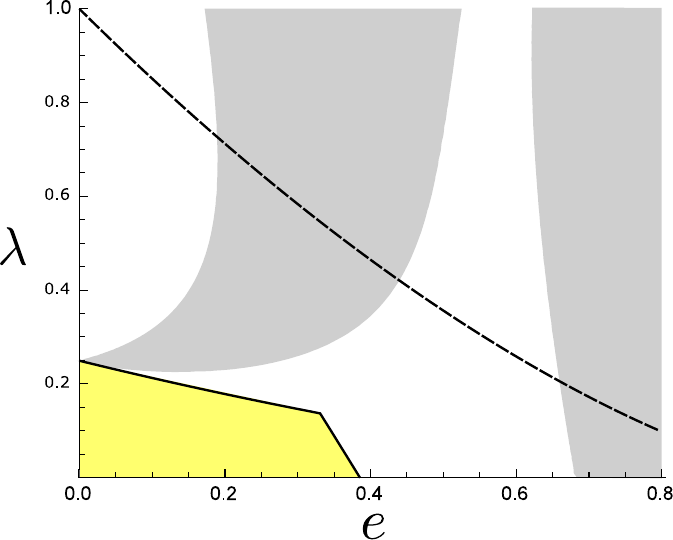}
	}
	\caption{Stability diagram in the $(e,\lambda)$-plane of the spin-orbit in \cite{misort2020}, Figure 3.}
	\label{fig:stab_spin_orbit}
\end{figure}

\begin{figure}
	\centering
	\scalebox{0.5}{\includegraphics{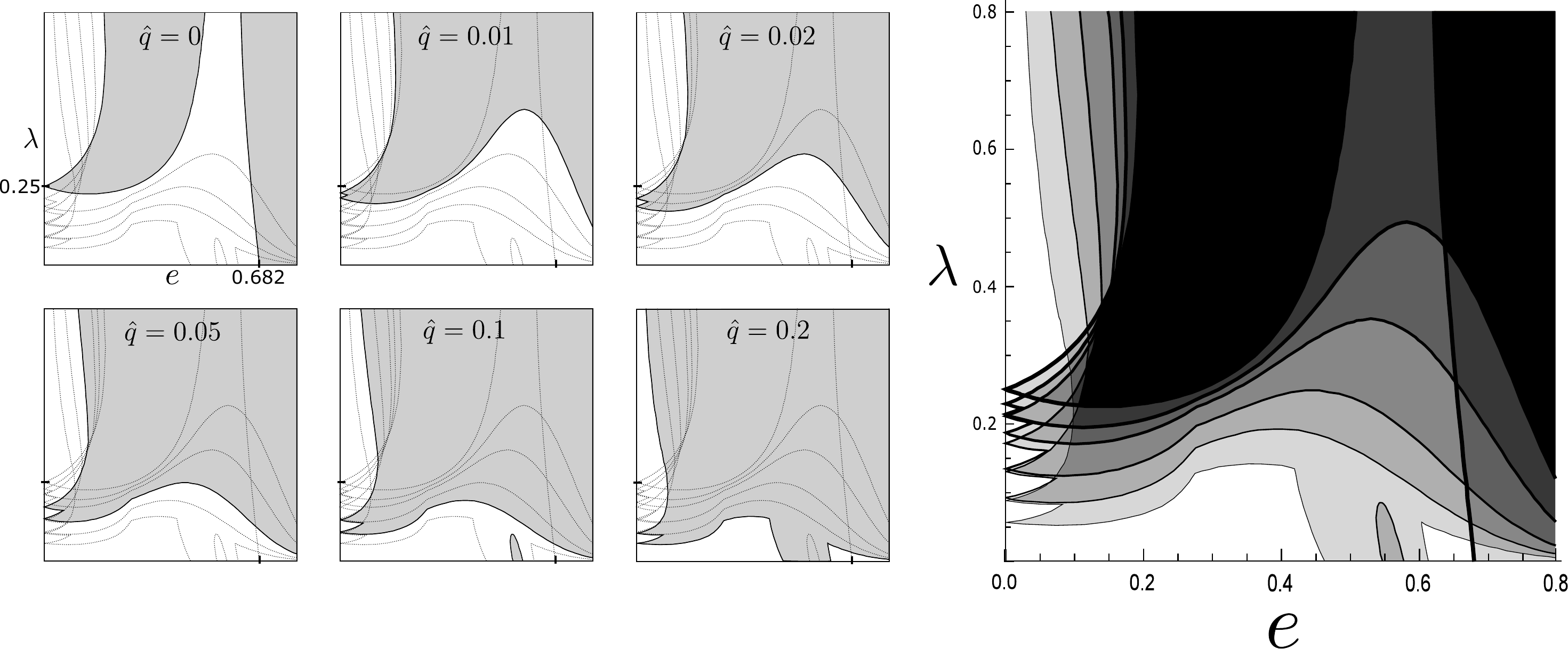}
	}
	\caption{Stability diagrams in the $(e,\lambda)$-plane in the case of equal bodies. The six plots in the left show the unstable region in gray for different values of $\hat q$. The image in the right shows the six diagrams superimposed. Darker tones of gray indicate more overlapping between unstable regions.}
	\label{fig:stab_several}
\end{figure}

We see in \Cref{fig:both} that an increase in the value of $\hat q$ results in a global reduction of the regions that we estimated theoretically, both for stability and uniqueness regions. This behavior can be compared with the numerical plots in \Cref{fig:stab_several}. We focus only on the case of equal bodies. Here we see how the instability region changes when we increase $\hat q$. There are some interesting phenomena.

\begin{enumerate}
	\item For $\hat q=0$ there is only one bifurcation point for the unstable solution in the $\lambda$-axis at $(e,\lambda)=(0,0.25)$. However, for $\hat q>0$, it becomes two bifurcation points at $(0,\lambda_{(1)})$ and $(0,\lambda_{(2)})$, with $0<\lambda_{(1)}<\lambda_{(2)}<0.25$. This opens a small window of stability at the points $(e,\lambda)$ with $e$ close to $0$ and $\lambda\in (\lambda_{(1)},\lambda_{(2)})$.
	\item For $\hat q=0$, apart from the instability region bifurcating from the $\lambda$-axis, there is another one bifurcating from the $e$-axis at $(e,\lambda)\approx(0.682,0)$. The existence of such bifurcation was studied in \cite{misort2020}. However, for $\hat q>0$, it looks that the last bifurcation point moves to the right, at the same time that the two instability regions merge into a single one. This shows that turning on the coupling has a stabilizing effect of the synchronous resonance for large $e$ and small $\lambda$. This holds up to a critical $\hat q\in (0.05,0.1)$ for which another unstable region bifurcates from the $e$-axis. This region merges with the large one at some $\hat q\in (0.1,0.2)$. This leaves an island of stability for large $e$ and small $\lambda$.
	\item In the right panel of \Cref{fig:stab_several} we see that there are some regions (the darkest ones), that remain unstable, not very affected by changes in $\hat q$. Instead, the lighter regions show more susceptibility to change their stability when $\hat q$ changes.
\end{enumerate}

In \Cref{fig:stab_several} we have taken large values of $\hat q$, compared to the real values in \Cref{tab:real}. From its definition in \cref{hat_d_q} and \cref{q_d} we see that $\hat q\le 1/a^2$ for equal bodies ($M=0.5,\mathcal C=0.5$). In order to be consistent with the Keplerian orbit approximation, $a$ should be quite larger than 1, that gives the scale of the objects. For example, $a$ of order $10$ would give an upper estimate of $\hat q$ of order $10^{-2}$. In consequence, for more realistic parameters, we should not consider the appearance of the additional instability region bifurcating from the $e$-axis from large $\hat q$.

\section{Discussion}\label{sec:discussion}

In this paper we have proposed a simplified mathematical model for the rotational dynamics in the Full Two-Body Problem. This model is a straightforward continuation of the spin-orbit problem. In consequence, we hope it will be of interest for physical applications as well as for theoretical studies. We have approached the problem from a theoretical point of view, but always keeping what we think is the essence of the physical problem: the dissipative effects are fundamental to explain the universe we observe today. In this sense, the spin-spin model not only broaden the scope of the spin-orbit problem in a higher dimensional phase space, but also contributes to fill the gap between the conservative and the dissipative effects considered in the spin-orbit problem. More precisely, if the \textit{dissipative} torque (of order $1/r^6$) is important in the evolution of a satellite, then, we should consider also the spin-spin interaction (of order $1/r^5$). Of course this two effects are more important when the bodies are closer to each other. In fact, in the spin-spin model the strength of the terms of order $1/r^5$ is given by parameters that compare the shape of the bodies with the size of the orbit, say, $\hat d_j$ and $\hat q_j$. In contrast, the spin-orbit problem only regards the equatorial oblateness of the satellite $d_j/\mathcal C_j$. It is reasonable to think that the different types of interactions, say, point-point, spin-orbit and spin-spin, must have their own specific relevance in different ranges of parameters. This shows that the non-Keplerian behavior of the full Lagrangian model \cref{spin-spin_V}, \cref{orbital_eqs}, should be investigated more deeply. Here the full expansion of the potential energy, given in \cref{V_full}, may also play a role. Moreover, as \cite{davsch2020} shows, non-planar oscillations around solutions of the planar problem can be studied and are of practical interest.

In the present research, we have made a brief theoretical study that allowed us to point out the importance of the double synchronous resonance and compare it with the synchronous resonance of the spin-orbit problem. Particularly, in a similar way than \cite{misort2020}, we determine sufficient conditions for the existence of an asymptotically stable periodic solution (capture into resonance). Besides, note that our estimates do not pretend to be optimal at all. Instead, we illustrate a way to extend to the spin-spin model the tools used for the spin-orbit model, as well as to compare them. Furthermore, in this sense we have included some numerical diagrams of linear stability in \Cref{fig:stab_several} that show us how the spin-spin interaction alters the schemes of the spin-orbit model.

We have applied our study to two real systems in double synchronous resonance. In one hand, Pluto and Charon are representative of a large binary with one body much larger than the other one, see \cite{dob1997}. On the other hand, the binary asteroid 617 Patroclus is an archetype of a small system of similar components, see \cite{davsch2020}, \cite{mar2006}. Here we propose a way how to make an effective comparison between different systems. Note that the convenient choice of units and parameters helps to clarify the comparison. As we expected, the best candidates to apply the spin-spin model are binary asteroids. They are very abundant in the solar system, e.g., about 15\% of the near-Earth asteroids are thought to be binaries. For a detailed discussion on the applications of the general spin-spin model and its full Lagrangian version, we refer to \cite{batmor2015} and the bibliography therein. With our study on the double synchronous resonance we hope to contribute to the study of the spin-spin resonances made in \cite{batmor2015}. Whereas they focus on the synchronization of both spins for slow circular orbital motion ($\dot f \ll \dot \theta_j$), we consider the full synchronization including the orbit with arbitrary eccentricity. According to \cite{davsch2020}, most of the equal mass binaries are expected to be in the double synchronous state. In \cite{mur}, Section 4.14, they provide a formula for a critical mass ratio of the components for this state to be possible. We want to remark also that, apart from the application to binary asteroids and large natural satellites, the spin-spin interaction can be relevant for artificial satellites whose rotation state along an orbit is important. For instance, communication satellites in equatorial orbits or even spacecraft exploring small bodies.

Finally, we think that the theoretical interest of the model is large, even beyond the phenomena already observed in the spin-orbit problem. For example, in the spin-orbit problem we can apply the notion of KAM stability because KAM tori confine regions in the phase space. However this does not happen in the spin-spin model due to the increase in the phase space dimension (two degrees of freedom and time dependence). In fact, it is expected that Arnold diffusion takes place in this case. In general, the weak coupling and the Hamiltonian character of the system makes it suitable to apply perturbative techniques. Particular questions may be investigated, such as chaos by overlapping of resonances, stochastic phenomena, normally hyperbolic manifolds, scattering maps, among other phenomena, see \cite{chi1979}.

\appendix 

\section{Units}\label{ap:units}

If $t$, $M$ and $l$ stand for time, mass and length respectively, the relation between our system of units and any other one is the following
\begin{equation*}
t_{\text{ours}} = \frac{2\pi}{T} t, \quad M_{\text{ours}} = \frac{M}{M_1+M_2} , \quad l_{\text{ours}} = l\sqrt{ \frac{M_1+M_2}{\mathcal C_1+\mathcal C_2}} \, .
\end{equation*}

\noindent It is worth mentioning that, if $I$ is any magnitude with units of moment of inertia, then the conversion is given simply by
\begin{equation*}
 I_{\text{ours}} =\frac{I}{\mathcal C_1+\mathcal C_2} .
\end{equation*}

The value of the gravitational constant $G$ in any system of units must respect Kepler's third law \cref{kepler3}. 

\section{Derivation of the potential of the spin-spin problem}\label{app:potential}

\subsection{Potential of the Full Two-Body Problem} \label{sub:V_full}

The expansion of the potential energy in the Full Two-Body Problem has been obtained in several papers, see \cite{tri2008} for example. In this subsection, and in order to introduce some notation, we present a short derivation of the spherical harmonics expansion, following the approach of \cite{mac1995} and \cite{bou2017}. See also a similar approach in \cite{matlep2009} and \cite{comlem2014}. We start from the formula
\begin{equation*}
V=-G\int \int \frac{\d M_1(\vec x_1) \, \d M_2(\vec x_2)}{|\vec x_1-\vec x_2|},
\end{equation*}

\begin{figure}
	\centering
	\scalebox{0.8}{\includegraphics{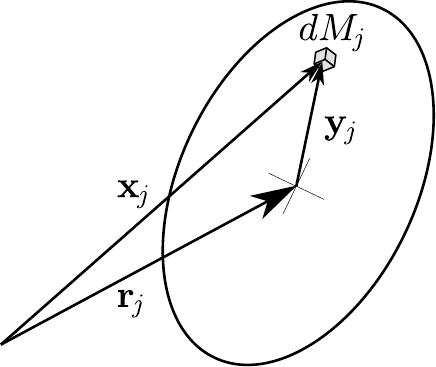}}
	\caption{Position vectors of the element of mass of an ellipsoid.}
	\label{fig:spin-spin_pot}
\end{figure}

\noindent where each $\vec x_j\in \mathds R^3$ is the position vector (with respect to the barycenter of the system) of the mass element $\d M_j(\vec x_j)$ corresponding to the ellipsoid $\mathcal E_j$. Making the change of variables $\vec y_j=\vec x_j-\vec r_j$, illustrated in \Cref{fig:spin-spin_pot}, and defining $\vec y=\vec y_1-\vec y_2$, we obtain
\begin{equation*}
V=-G \int \int \frac{d M_1(\vec y_1) \, dM_2(\vec y_2)}{|\vec r - \vec y|}.
\end{equation*}

\noindent Recall that $\vec r =\vec r_2 -\vec r_1 $. The usual expansion in spherical harmonics gives us
\begin{equation}\label{V_expan}
V=-G\sum_{(l,m)\in \Upsilon}  Q_{l,m}  \frac{Y_{l,m}(\hat {\vec r}) }{|\vec r |^{l+1}},
\end{equation}

\noindent where
\begin{equation*}
\Upsilon= \{ (l,m)\in \mathds Z^2 : \ 0\le |m| \le l    \}
\end{equation*}

\noindent and the multipolar moments of the system $Q_{l,m}$ are defined by
\begin{equation}\label{Q_lm_y}
Q_{l,m}=\int \int |\vec y|^l \bar Y_{l,m}(\hat {\vec y}) \d M_1(\vec y_1) \, \d M_2(\vec y_2),
\end{equation}

\noindent where the upper bar indicates complex conjugation. We use the Schmidt semi-normalization\footnote{With this choice, the Legendre polynomials can be written in terms of the spherical harmonics as
	\begin{equation*}
	P_l( \hat {\vec r} \cdot \hat {\vec y} )= \sum_{m=-l}^l Y_{l,m}(\hat {\vec r}) \bar Y_{l,m}(\hat {\vec y}). 
	\end{equation*}
} of the spherical harmonics in the same way as in \cite{bou2017}. Assume that, in the inertial frame, $\vec{r}$ has spherical coordinates $(r,\vartheta,\phi)$, then, the spherical harmonics are defined by
\begin{equation*}
Y_{l,m}(\vartheta,\phi) = (-1)^m \sqrt{\frac{(l-m)!}{(l+m)!}} P_{l,m} (\cos \vartheta) \exp(i m\phi),
\end{equation*}

\noindent where the associated Legendre polynomials are given by
\begin{equation*}
P_{l,m}(x)=\frac{1}{2^l l!} (1-x^2)^{m/2} \frac{\d^{l+m}}{\d x^{l+m}} (x^2-1)^l,\quad x\in[-1,1].
\end{equation*}

Note that, since $ {\vec y}=\vec y_1-\vec y_2$, we cannot factorize the integral in \cref{Q_lm_y} into factors that involve quantities associated to each body separately. However we can express this integral as a sum of factorized terms. For this we can define the auxiliary normalized solid harmonics 
\begin{equation*}
\mathcal Y_{l,m}(\vec x) = \frac{|\vec x|^l Y_{l,m}(\hat{\vec x})}{\sqrt{(l-m)!(l+m)!}}, \quad \vec x \in \mathds R^3,
\end{equation*}

\noindent and apply the translation formula, given in equation (313) in \cite{ste1973},
\begin{equation*}
\mathcal Y_{l,m}(\vec y_1-\vec y_2)  = \sum_{\lambda_1,\mu_1} \sum_{\lambda_2,\mu_2} \mathcal Y_{\lambda_1,\mu_1}(\vec y_1)\mathcal Y_{\lambda_2,\mu_2}(-\vec y_2) ,
\end{equation*}

\noindent where $\lambda_j$ and $\mu_j$ are integers running all the values such that
\begin{equation*}
0\le \lambda_j \le l, \quad \lambda_1+\lambda_2=l; \quad -\lambda_j\le \mu_j \le \lambda_j, \quad \mu_1+\mu_2=m.
\end{equation*}

\noindent Then, using the parity relation $ Y_{l,m}(-\hat{\vec x})=(-1)^l Y_{l,m}(\hat{\vec x})$, the expression \cref{Q_lm_y} becomes
\begin{equation}\label{Q_lm_decomp}
\frac{Q_{l,m}}{\sqrt{(l-m)!(l+m)!}}=  \sum_{\lambda_1,\mu_1} \sum_{\lambda_2,\mu_2} (-1)^{\lambda_2} \frac{M_1 R_1^{\lambda_1} Z^{1)}_{\lambda_1,\mu_1}}{\sqrt{(\lambda_1-\mu_1)!(\lambda_1+\mu_1)!}} \frac{M_2 R_2^{\lambda_2} Z^{2)}_{\lambda_2,\mu_2}}{\sqrt{(\lambda_2-\mu_2)!(\lambda_2+\mu_2)!}},
\end{equation}

\noindent where, the complex Stokes coefficients\footnote{The quantities $Z^{j)}_{l,m}$ provide the expansion of the potential created for the body $\mathcal E_j$. They are related to the usual parameters $C^{j)}_{l,m}$ and $S^{j)}_{l,m}$ by $$ C^{j)}_{l,m}+ i S^{j)}_{l,m} = (-1)^m \frac{2}{1+\delta_{m,0}}  \sqrt{\frac{(l-m)!}{(l+m)!}} \bar Z^{j)}_{l,m},  \quad m\ge 0, $$ where $\delta_{m,n}$ is the Kronecker delta.} of each ellipsoid are given by
\begin{equation}\label{Z_lm}
Z^{j)}_{\lambda,\mu} = \frac{1}{M_j R_j^{\lambda}}\int  |\vec y_j|^{\lambda} \bar Y_{\lambda,\mu}(\hat {\vec y}_j) \d M_j(\vec y_j) ,
\end{equation}

\noindent and $R_j$ is the mean radius of $\mathcal E_j$.

Finally, since in the potential energy the summation range is $0\le l\le \infty$, $-l\le m\le l$, which are all the possible terms, then, from \cref{V_expan} and \cref{Q_lm_decomp} we can write
\begin{equation}\label{V_expan_total}
V=-\frac{G M_1 M_2 }{|\vec r|} 
\sum_{\substack{(\lambda_1,\mu_1)\in \Upsilon \\ (\lambda_2,\mu_2)\in \Upsilon}}    
(-1)^{\lambda_2} \gamma_{\lambda_2,\mu_2}^{\lambda_1,\mu_1} 
\parentesis{\frac{R_1}{|\vec r|}}^{\lambda_1} \parentesis{\frac{R_2}{|\vec r|}}^{\lambda_2}  Z^{1)}_{\lambda_1,\mu_1} Z^{2)}_{\lambda_2,\mu_2}Y_{\lambda_1+\lambda_2,\mu_1+\mu_2}(\hat{\vec r}) ,
\end{equation}

\noindent where we defined the constants
\begin{equation*}
\gamma_{\lambda_2,\mu_2}^{\lambda_1,\mu_1} = \sqrt{ \frac{
		(\lambda_1+\lambda_2-\mu_1-\mu_2)!(\lambda_1+\lambda_2+\mu_1+\mu_2)!
	}{
		(\lambda_1-\mu_1)!(\lambda_1+\mu_1)!(\lambda_2-\mu_2)!(\lambda_2+\mu_2)!
}}.
\end{equation*}

\subsection{Potential of the ellipsoidal spin-spin model}\label{sub:V_planar}

Note that the terms in the expansion \cref{V_expan_total}, and in particular $Z^{j)}_{\lambda,\mu}$, have to be computed with respect to the inertial frame.  Let us call $\mathcal E_j$-frame to the fixed body frame of each ellipsoid, formed by its center and its principal directions associated respectively to $\mathsf a_j$, $\mathsf b_j$ and $\mathsf c_j$. Let $\mathcal Z^{j)}_{\lambda,\mu}$ be the Stokes coefficients computed with respect to the $\mathcal E_j$-frame. The $\mathcal E_j$-frame is rotated, with respect to the inertial frame, with the rotation labelled by the Euler $z$-$y$-$z$ angles $(\alpha,\beta,\gamma)=(\theta_j,0,0)$.

Let $\vec x \in \mathds R^3$ be a vector with spherical coordinates $(|\vec x|,\vartheta_j, \phi_j )$ with respect to the $\mathcal E_j$-frame and $(|\vec x|,\vartheta, \phi)$ with respect to the reference frame formed by the center of the body $\mathcal E_j$ and the directions parallel to those of the inertial frame. The relation between spherical harmonics $Y_{l,m}(\hat{\vec x})$ computed with respect to both systems of reference is the following
\begin{equation*}
Y_{l,m}(\vartheta_j,\phi_j)=\sum_{m'=-l}^l Y_{l,m'}(\vartheta,\phi) \bar D^l_{m,m'} (\alpha,\beta,\gamma)
\end{equation*}

\noindent where $ D^{l}_{m,m'} (\alpha,\beta,\gamma)$ is the $(m,m')$-element of the Wigner $D$-matrix associated to the rotation given by the Euler $z$-$y$-$z$ angles $(\alpha,\beta,\gamma)$, see \cite{ste1973}. Then, from \eqref{Z_lm},
\begin{equation*}
Z^{j)}_{\lambda,\mu} = \sum_{\mu'=-\lambda}^{\lambda} D^{\lambda}_{\mu,\mu'} (\alpha,\beta,\gamma)\  \mathcal Z^{j)}_{\lambda,\mu'}.
\end{equation*}

From the definition of the Wigner $D$-matrices, see for instance equation (186) in \cite{ste1973}, in our planar case they are diagonal $
D^{\lambda}_{\mu,\mu'}  (\theta_j,0,0) = \delta_{\mu,\mu'} \exp({-i \mu' \theta_j})$,  where $\delta_{\mu,\mu'}$ is the Kronecker delta. Then,
\begin{equation*}
Z^{j)}_{\lambda,\mu} = \mathcal Z^{j)}_{\lambda,\mu}\exp({-i \mu \theta_j}).
\end{equation*}

Now we can express \cref{V_expan_total} in terms of $\mathcal Z^{j)}_{\lambda,\mu}$. In \cite{bal1994} an expansion of the potential created by a homogeneous ellipsoid was computed. Incidentally, a complicated general expression for $\mathcal Z^{j)}_{\lambda,\mu}$ was computed there as well. In the next Proposition we summarize some remarkable properties of those quantities.

\begin{proposition}
	Let $\mathcal Z_{\lambda,\mu}$ be Stokes coefficients of an homogeneous ellipsoid computed in its own fixed body frame. They have the following properties
	\begin{enumerate}
		\item $\mathcal  Z_{\lambda,\mu}\in \mathds R$.
		\item $\mathcal  Z_{\lambda,\mu}\equiv 0$ if either $\lambda$ or $\mu$ are odd numbers.
		\item $\mathcal  Z_{\lambda,-2n}=\mathcal Z_{\lambda,2n}$, with $n$ integer.
	\end{enumerate} 
\end{proposition}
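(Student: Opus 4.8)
The plan is to read off all three properties directly from the defining integral \eqref{Z_lm}, using only that a homogeneous ellipsoid is invariant under each of the three reflections about its principal planes.

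First I would rewrite $\mathcal Z_{\lambda,\mu}$ as the integral of a polynomial against the uniform mass measure $\d M$ on the ellipsoid $E=\{(y_1/\mathsf a)^2+(y_2/\mathsf b)^2+(y_3/\mathsf c)^2\le 1\}$. From the explicit formulas for $Y_{\lambda,\mu}$ and $P_{\lambda,\mu}$ (using the Legendre identity $P_{l,-m}=(-1)^m\tfrac{(l-m)!}{(l+m)!}P_{l,m}$ to reduce negative orders), the function $|\vec y|^\lambda\bar Y_{\lambda,\mu}(\hat{\vec y})$ equals the real constant $(-1)^\mu\sqrt{(\lambda-\mu)!/(\lambda+\mu)!}$ times the homogeneous polynomial $(y_1\mp i y_2)^{|\mu|}\,g\bigl(y_3,\,y_1^2+y_2^2+y_3^2\bigr)$ of degree $\lambda$, where the sign is $-$ for $\mu\ge 0$ and $+$ for $\mu<0$, and $g$ is a polynomial in which every monomial carries a power of $y_3$ congruent to $\lambda-\mu\pmod 2$. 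Expanding the binomial, every monomial $y_1^{e_1}y_2^{e_2}y_3^{e_3}$ appearing in $|\vec y|^\lambda\bar Y_{\lambda,\mu}$ then satisfies $e_1\equiv|\mu|-k$, $e_2\equiv k$ for some $0\le k\le|\mu|$, and $e_3\equiv\lambda-\mu\pmod 2$.

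Next I would invoke the reflection symmetries: since $E$ and the uniform density are invariant under each $\sigma_i\colon y_i\mapsto-y_i$, the moment $\int_E y_1^{e_1}y_2^{e_2}y_3^{e_3}\,\d M(\vec y)$ vanishes unless $e_1,e_2,e_3$ are all even. Property~2 is then immediate: if $\lambda$ is odd then $e_1+e_2+e_3=\lambda$ forces some exponent odd, and if $\lambda$ is even but $\mu$ odd then $e_1+e_2\equiv|\mu|$ is odd, so $e_1,e_2$ cannot both be even; in either case every monomial integrates to zero, so $\mathcal Z_{\lambda,\mu}=0$. For property~1 I would note that in the surviving (all-even) monomials $k$ is even, hence the binomial factor $(\mp i)^k=(-1)^{k/2}$ is real; combined with the real Legendre coefficients and the real normalization constant, $\mathcal Z_{\lambda,\mu}$ is a real-linear combination of the real moments of $E$, hence real. (Alternatively, conjugating \eqref{Z_lm} gives $\overline{\mathcal Z_{\lambda,\mu}}=(-1)^\mu\mathcal Z_{\lambda,-\mu}$, which with property~3 below settles the even case and with property~2 the odd case.)

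Finally, for property~3 I would compare $\bar Y_{\lambda,-2n}$ with $\bar Y_{\lambda,2n}$: the Legendre identity and the explicit normalization give $\bar Y_{\lambda,-2n}(\vartheta,\phi)=\bar Y_{\lambda,2n}(\vartheta,-\phi)$, so substituting into \eqref{Z_lm} and applying the change of variables $\vec y\mapsto\sigma_2(\vec y)=(y_1,-y_2,y_3)$ — which fixes $|\vec y|$, $\vartheta$, $E$ and the uniform measure while sending $\phi\mapsto-\phi$ — yields $\mathcal Z_{\lambda,-2n}=\mathcal Z_{\lambda,2n}$. I expect the only delicate point to be the first step, namely pinning down the monomial/parity structure of the solid harmonic $|\vec y|^\lambda\bar Y_{\lambda,\mu}$; once that is in place, properties~1--3 reduce to the elementary vanishing of odd moments of an ellipsoid. (One could instead specialize the closed-form expression for $\mathcal Z_{\lambda,\mu}$ of \cite{bal1994}, but the symmetry argument above is shorter and self-contained.)
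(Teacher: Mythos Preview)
Your proposal is correct and follows precisely the approach indicated in the paper, which does not reproduce a full proof but refers to \cite{misPhD} and notes only that ``it is based on the symmetry properties of the spherical harmonics and the geometrical symmetries of the ellipsoids.'' Your argument fleshes out exactly this outline: the parity structure of the solid harmonics $|\vec y|^\lambda\bar Y_{\lambda,\mu}$ combined with invariance of the homogeneous ellipsoid under the three principal reflections $\sigma_i$ yields all three properties, and the details you provide (the monomial parities, the vanishing of odd moments, and the change of variables $\sigma_2$ for property~3) are all sound.
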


We will not reproduce the whole proof here but it can be found in \cite{misPhD}. We just want to remark that it is based on the symmetry properties of the spherical harmonics and the geometrical symmetries of the ellipsoids.

\begin{remark}
	Regarding these properties, a convenient expression to compute numerically $\mathcal Z_{2k,2n}$, with $k\ge 0$ and $n$ integers, is
	\begin{multline*}
	\mathcal Z_{2k,2n} = \frac{3}{4 \pi R^{2k}} \sqrt{\frac{(2k-2n)!}{(2k+2n)!}} \int_{\mathsf B} \operatorname{Re}( (\mathsf aZ-i\mathsf bY)^{2n}) \frac{[(\mathsf aX)^2+(\mathsf bY)^2+(cZ)^2]^{k}}{[(\mathsf aX)^2+(\mathsf bY)^2]^n} \times\\
	\times   P_{2k,2n} \parentesis{\frac{\mathsf cZ}{\sqrt{(\mathsf aX)^2+(\mathsf bY)^2+(\mathsf cZ)^2}}}  \d X \d Y \d Z,
	\end{multline*}
	
	\noindent where $R$ is the mean radius of the ellipsoid, $\mathsf a$, $\mathsf b$ and $\mathsf c$ are its principal semi-axes, $\operatorname{Re}$ indicates the real part and $\mathsf B$ is the unit ball, defined by $X^2+Y^2+Z^2\le 1$. Moreover, $\mathcal Z_{2k,2n}$ can be written only in terms of $M$ and the principal moments of inertia because	
	\begin{equation*}
	\mathsf a=\sqrt{ \frac{5(- \mathcal A+ \mathcal B+ \mathcal C)}{2M}}, \quad 
	\mathsf b=\sqrt{ \frac{5( \mathcal A- \mathcal B+ \mathcal C)}{2M}}, \quad 
	\mathsf c=\sqrt{ \frac{5( \mathcal A+ \mathcal B- \mathcal C)}{2M}}.
	\end{equation*}
	
	Recalling the definitions of $q$ and $d$ in \cref{q_d}, the first non-vanishing Stokes coefficients are given by
	\begin{equation}\label{Z_spin-orbit}
	\mathcal Z_{0,0}=1,\qquad 
	\mathcal Z_{2,0} = -\frac{1}{2}\frac{q}{M R^2} , \qquad
	\mathcal Z_{2,2} = \sqrt{\frac{3}{8}} \, \frac{d}{M R^2},
	\end{equation}
	\begin{equation}\label{Z_spin-spin}
	\mathcal Z_{4,0}=\frac{15}{56} \frac{d^2 + 2q^2}{M^2 R^4} , \qquad
	\mathcal Z_{4,2}=-\frac{15}{28}\sqrt{\frac{5}{3}}\, \frac{d \, q}{M^2 R^4}  ,\qquad 
	\mathcal Z_{4,4}=\frac{15}{8} \sqrt{\frac{5}{14}} \, \frac{d^2}{M^2 R^4} ,
	\end{equation}
	
	\noindent and it seems that, in general, $\mathcal Z_{2k,2n}$ has the form of a homogeneous polynomial of degree $k$ with respect to $q/(MR^2)$ and $d/(MR^2)$.
\end{remark}

In order to simplify expression \eqref{V_expan_total}, recall that $\vec r$ is the vector pointing from the center of $\mathcal E_1$ to the center of $\mathcal E_2$. Then, the spherical coordinates of  $\vec r$ with respect to the inertial frame are $(r,\vartheta=\pi/2,\phi=f)$. The non-vanishing terms of \eqref{V_expan_total} are such that $\lambda_j=2l_j$ and $\mu_j=2m_j$. Let us call from now on $l=l_1+l_2$ and $m=m_1+m_2$. We can apply the formula
\begin{equation*}
Y_{2l,2m}(\pi/2,f)= \sqrt{\frac{(2l-2m)!}{(2l+2m)!}} P_{2l,2m} (0) e^{2im f},
\end{equation*}

\noindent and the following property of the associated Legendre polynomials
\begin{equation*}
P_{2l,2m}(0)=\frac{(-1)^{l-m}}{4^{l}} \frac{(2l+2m)!}{(l-m)!(l+m)!},
\end{equation*}

\noindent see for instance equation (68) in \cite{ste1973}. Then, we can write the potential keeping only the real part of $V$, so that the final expression potential is
\begin{equation}\label{V_full}
V=-\frac{G M_1 M_2 }{r} 
\sum_{\substack{(l_1,m_1)\in \Upsilon \\ (l_2,m_2)\in \Upsilon}}    \Gamma_{l_2,m_2}^{l_1,m_1}
\parentesis{\frac{R_1}{r}}^{2l_1} \parentesis{\frac{R_2}{r}}^{2l_2}  \mathcal Z^{1)}_{2l_1,2m_1} \mathcal Z^{2)}_{2l_2,2m_2}
\cos (2m_1(\theta_1-f)+2m_2 (\theta_2-f)) ,
\end{equation}

\noindent where 
\begin{equation}
\Gamma_{l_2,m_2}^{l_1,m_1} = 
\frac{(-1)^{l-m}}{4^{l}\sqrt{(2l_1-2m_1)!(2l_1+2m_1)!(2l_2-2m_2)!(2l_2+2m_2)!}}
\frac{(2l-2m)!(2l+2m)!}{(l-m)!(l+m)!}.
\end{equation}

The first terms of the expansion \cref{V_full} can be computed using \cref{Z_spin-orbit} and \cref{Z_spin-spin}. The terms corresponding to $l=l_1+l_2$, for $l=0,1$ and $2$, are shown in \cref{V_4_ph_parameters}.

\section*{Acknowledgements}

I would like to thank my PhD supervisors Prof. Alessandra Celletti and Prof. Rafael Ortega. A. Celletti guided me with the model, its derivation and applications. R. Ortega oriented me with the theoretical results and the overall approach. With their valuable suggestions, both of them encouraged me to develop this work and made me see its potential. I also thank Joan Gimeno for his help with the numerical plots in \Cref{fig:stab_several}.

This research was supported by the MSCA-ITN-ETN Stardust-R, Grant Agreement 813644.


\bibliographystyle{siamplain}
\bibliography{references}

\end{document}